\documentclass{CSML}
\usepackage{verbatim}
\usepackage{bbm}
\usepackage{hyperref}
\usepackage{color}

%%% Layout Editing Macros %%%

\lmcsheading{}{1--11}{}{}{Jan.~21, 2016}{Jan.~23, 2017}{}

\hypersetup{
    colorlinks=true,
    linkcolor=black,
    citecolor=black,
    filecolor=black,
    urlcolor=black,
}

\title[Logical compactness and constraint satisfaction problems]
{Logical compactness and constraint satisfaction problems}
\author[D. Rorabaugh]{Danny Rorabaugh\rsuper a}
\address{{\lsuper{a}}Department of Mathematics and Statistics,
Queen's University,
Kingston, Ontario, Canada}
\author[C. Tardif]{Claude Tardif\rsuper b}
\address{{\lsuper{b,c}}Department of Mathematics and Computer Science,
Royal Military College of Canada,
Kingston, Ontario, Canada}
\author[D. Wehlau]{David Wehlau\rsuper c}
\address{\vspace{-18pt}}
%%%%%%%%%%%%%%%%%macros%%%%%%%%%%%%%%%%%%%
%\newtheorem{proposition}{Proposition}
%\newtheorem{lemma}[proposition]{Lemma}
%\newtheorem{theorem}[proposition]{Theorem}
%\newtheorem{corollary}[proposition]{Corollary}
%\newtheorem{fact}[proposition]{Fact}
%\newtheorem{problem}{Problem}
\newcommand{\idm}[1]{\mbox{\rm id}_{#1}}
\newfont{\Bb}{msbm10 scaled\magstep1}

\newcommand{\bA}{\mathbb{A}}
\newcommand{\bB}{\mathbb{B}}
\newcommand{\bC}{\mathbb{C}}

\newcommand{\bK}{\mathbb{K}}

%%% Axioms Macros %%%

\newcommand{\choice}[1]{$\mbox{\rm Choice}(#1)$}
\newcommand{\kw}[1]{$\mbox{\rm KW}(#1)$}

%%%%%%%%%%%%%%%%%%%%%%%%%%%%%%%%%%%%%%%%
\begin{document}

\begin{abstract} We investigate a correspondence between
the complexity hierarchy of constraint satisfaction problems
and a hierarchy of logical compactness hypotheses for finite relational
structures. It seems that the harder a constraint satisfaction
problem is, the stronger the corresponding compactness hypothesis is.
At the top level, the NP-complete constraint satisfaction problems
correspond to compactness hypotheses that are equivalent
to the ultrafilter axiom in all the cases we have investigated. 
At the bottom level, the simplest constraint satisfaction problems 
correspond to compactness hypotheses that
are readily provable from the axioms of Zermelo and Fraenkel.
\end{abstract}

\keywords{Compactness theorem, Relational structures, Constraint satisfaction problems}
%\subjclass{03E65, 68Q19}

\maketitle

\section{Introduction} 

A relational structure $\bA$ is said to be {\em compact}
if for any structure $\bB$ of the same type, 
$\bB$ admits a homomorphism to $\bA$ whenever 
every finite substructure $\bB'$ of $\bB$ admits 
a homomorphism to $\bA$. The compactness theorem
of logic implies that every finite structure is compact. 
However, the compactness theorem is equivalent to the ultrafilter 
axiom, a consequence of the axiom of choice 
that is not provable from the axioms of Zermelo and Fraenkel. 
In this paper, we restrict our attention to the case where 
$\bA$ is finite, but we do not assume the ultrafilter axiom. 
Instead, for each structure $\bA$, we consider the  
statement ``$\bA$ is compact'' as an hypothesis 
that is consistent with the axioms of Zermelo and Fraenkel, 
but not necessarily provable from these.

It turns out that the strength of such compactness hypotheses
varies widely.  For some structures, compactness can be proved from 
the axioms of Zermelo and Fraenkel. At the other extreme,
for some other structures, compactness implies the ultrafilter axiom,
hence the compactness of all finite structures. Perhaps it makes 
sense to call the latter structures ``compactness-complete''.
This designation is borrowed from that of complexity classes,
but our results also parallel complexity results:
The structures $\bA$ which we prove to be 
compactness-complete have their corresponding constraint 
satisfaction problems NP-complete. In contrast, the 
structures $\bA$ for which we show
that the statement  ``$\bA$ is compact'' is provable 
from the axioms of Zermelo and Fraenkel are the structures 
of ``width one''. The corresponding constraint 
satisfaction problems are arguably the simplest
polynomial cases. Also, the compactness of a structure 
implies that of any structure 
which can be primitively positively defined from it. 
Therefore it is possible that the algebraic approach 
to the complexity classification of constraint satisfaction 
problems would be relevant to the study of the hierarchy 
of compactness hypotheses as well.
 
The ``dichotomy conjecture'' of 
Feder and Vardi~\cite{federvardi} states that every constraint satisfaction problem is 
either polynomial or NP-complete. There is no such dichotomy
between the compactness hypotheses that are equivalent
to the ultrafilter axiom and those that are provable from
the axioms of Zermelo and Fraenkel. Indeed we will exhibit
structures with polynomial constraint satisfaction problems,
for which compactness hypotheses are equivalent to some
intermediate ``cardinal-specific'' versions of the axiom of choice.

The axiom of choice and its variants grew out of the
need to distinguish between existential and constructive
aspects of mathematical proofs. In time, tools such as 
forcing theory became available to establish independence 
results. The possibility of connecting such independence
results with constraint satisfaction problems
is the main motivation of our investigation.

The paper is structured as follows. The next section will present
the basics on homomorphisms of relational structures and 
an alternative characterisation of compactness. In the following two
sections, we will present our results on compactness hypotheses derivable
from the axioms of Zermelo and Fraenkel, and on compactness hypotheses 
equivalent to the ultrafilter axiom. Up to then, the relevant set-theoretic 
concepts are pretty standard, but afterwards we will present 
lesser known axioms weaker than the ultrafilter axiom, before moving on to
intermediate compactness hypotheses. 

\section{Relational structures and compactness} \label{tolerant}  
A {\em type} is a finite set $\sigma = \{R_1,\dots,R_m\}$ of 
{\em relation symbols}, each with  an {\em arity} 
$r_i$ assigned to it. 
A $\sigma$-structure is a relational structure 
$\bA = (A;R_1(\bA),\dots,R_m(\bA))$ where $A$ 
is a nonempty set called the {\em universe} of $\bA$, and 
$R_i(\bA)$ is an $r_i$-ary relation on $A$ for each $i$.
Let $\bA$ and $\bB$ be $\sigma$-structures, with universes 
$A$ and $B$ respectively.
A {\em homomorphism} of $\bB$ to $\bA$ is a map 
$f: B \rightarrow A$ such that $f(R_i(\bB)) 
\subseteq R_i(\bA)$ for all $i=1,\dots,m$.
If $B$ is a subset of $A$ and the identity is
a homomorphism from $\bB$ to $\bA$,
then $\bB$ is called a {\em substructure} of $\bA$.
The constraint satisfaction problem associated to a structure $\bA$
is the problem of determining whether an input structure admits 
a homomorphism to $\bA$.

As stated in the introduction, a relational structure $\bA$ is
called {\em compact} if for any structure $\bB$ of the same type, 
$\bB$ admits a homomorphism to $\bA$ whenever 
every finite substructure $\bB'$ of $\bB$ admits 
a homomorphism to $\bA$. The main result of this section
is an alternative characterisation of compact structures
in terms of ``filter-tolerant'' powers. Recall
that a filter $\mathcal{F}$ on a set $I$ is a family of nonempty
subsets of $I$ that is closed under intersection and
contains every superset of each of its members. 
A filter which is maximal
with respect to inclusion is called an ultrafilter. 
Equivalently, a filter is an
ultrafilter if and only if it contains precisely 
one of each pair of complementary
subset of $I$. The ultrafilter axiom states that 
every filter extends to an ultrafilter.

Let $\mathcal{F}$ be a filter on a set $I$, and $\mathbb{A}$ a 
relational structure of some type $\sigma$. 
The {\em $\mathcal{F}$-tolerant power} 
$\mathbb{A}^I_{\mathcal{F}}$ is the structure defined as follows.
The universe of $\mathbb{A}^I_{\mathcal{F}}$ is the set
$A^I$ of all functions of $I$ to the universe $A$ of $\mathbb{A}$,
and for each $R \in \sigma$ of arity $k$, 
$R(\mathbb{A}^I_{\mathcal{F}}) \subseteq (A^I)^k$
is the set of all $k$-tuples $(f_1, \ldots, f_k)$ such that the 
set $\{i \in I | (f_1(i), \ldots, f_k(i)) \in R(\mathbb{A}) \}$
belongs to $\mathcal{F}$.

\begin{prop} \label{com=ft}
Let $\mathbb{A}$ be a finite relational structure. Then $\mathbb{A}$
is compact if and only if for every set $I$ and every filter
$\mathcal{F}$ on $I$, $\mathbb{A}^I_{\mathcal{F}}$ admits
a homomorphism to $\mathbb{A}$.
\end{prop}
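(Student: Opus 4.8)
The plan is to prove both implications directly from the definitions, the one tool needed being that a filter is closed under finite intersections and contains only nonempty sets.

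For the forward direction, assume $\bA$ is compact and fix a set $I$ and a filter $\cF$ on $I$. Since $\bA$ is compact, it suffices to exhibit a homomorphism to $\bA$ from each finite substructure $\bB'$ of $\bA^I_\cF$. Such a $\bB'$ has finite universe $\{f_1,\dots,f_n\}\subseteq A^I$, and since the type is finite and $A$ is finite, only finitely many tuples lie in its relations. For each relation $R$ and each tuple $(f_{j_1},\dots,f_{j_k})\in R(\bB')$ the set $X=\{i\in I \mid (f_{j_1}(i),\dots,f_{j_k}(i))\in R(\bA)\}$ lies in $\cF$ by definition of the tolerant power. The intersection of these finitely many sets again lies in $\cF$, hence is nonempty; picking any index $i_0$ in it, I claim the evaluation map $f\mapsto f(i_0)$ is a homomorphism $\bB'\to\bA$. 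Indeed $i_0\in X$ forces $(f_{j_1}(i_0),\dots,f_{j_k}(i_0))\in R(\bA)$ for every constrained tuple. Compactness of $\bA$, applied to the structure $\bA^I_\cF$, then yields the desired homomorphism.

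For the converse, assume every tolerant power of $\bA$ maps to $\bA$, and let $\bB$ be a structure all of whose finite substructures admit homomorphisms to $\bA$; I must produce a homomorphism $\bB\to\bA$. I would take as index set
\[
 I=\{(S,h)\mid S \text{ a finite subset of the universe } B,\ h \text{ a homomorphism } \bB_S\to\bA\},
\]
where $\bB_S$ denotes the substructure induced on $S$, and generate a filter $\cF$ on $I$ from the sets $\widehat{S_0}=\{(S,h)\in I\mid S\supseteq S_0\}$ as $S_0$ ranges over the finite subsets of $B$. Each $\widehat{S_0}$ is nonempty because $\bB_{S_0}$ admits a homomorphism by hypothesis, and $\widehat{S_0}\cap\widehat{T_0}=\widehat{S_0\cup T_0}$, so these sets generate a genuine filter. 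Fixing any $a_0\in A$, I define $\phi\colon B\to A^I$ by $\phi(b)(S,h)=h(b)$ when $b\in S$ and $\phi(b)(S,h)=a_0$ otherwise.

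The verification is that $\phi$ is a homomorphism $\bB\to\bA^I_\cF$: if $(b_1,\dots,b_k)\in R(\bB)$, then for every pair $(S,h)$ with $S\supseteq\{b_1,\dots,b_k\}$ one has $(b_1,\dots,b_k)\in R(\bB_S)$, whence $(\phi(b_1)(S,h),\dots,\phi(b_k)(S,h))=(h(b_1),\dots,h(b_k))\in R(\bA)$; thus the set of indices witnessing membership contains $\widehat{\{b_1,\dots,b_k\}}\in\cF$. Composing $\phi$ with the hypothesized homomorphism $\bA^I_\cF\to\bA$ produces a homomorphism $\bB\to\bA$, proving compactness. The step I expect to require the most care is precisely the choice of index set in this converse: a naive construction would select a single homomorphism for each finite $S$ and thereby smuggle in a choice principle, which is exactly the kind of set-theoretic strength the paper is trying to isolate. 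Indexing instead by \emph{all} pairs $(S,h)$ makes $\phi$ explicitly definable and keeps the whole equivalence inside Zermelo--Fraenkel set theory, so that ``$\bA$ is compact'' and the tolerant-power condition are genuinely interchangeable as hypotheses.
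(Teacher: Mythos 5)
Your proof is correct. The forward direction (evaluate at a point of the finite intersection of the constraint sets) is exactly the paper's argument, but your converse constructs the witnessing tolerant power in a genuinely different way. The paper takes $I$ to be the set of \emph{all} maps from the universe $B$ to $A$, generates the filter from the sets $(R,(x_1,\ldots,x_k))^+$ of maps that happen to preserve a given relational tuple of $\mathbb{B}$ --- nonemptiness of finite intersections there requires a small argument, namely that a homomorphism on the finite substructure spanned by the coordinates involved can be extended arbitrarily to all of $B$ --- and then maps $\mathbb{B}$ in via the evaluation map $\psi(x)(i)=i(x)$, which needs no case distinction. You instead index by pairs $(S,h)$ consisting of a finite induced substructure and a homomorphism on it, generate the filter from the domain-containment sets $\widehat{S_0}$, and send $b$ to the function $(S,h)\mapsto h(b)$, padded by a basepoint $a_0$ off the domain. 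Your filter is cheaper to verify, since each generator $\widehat{S_0}$ is nonempty directly by hypothesis and $\widehat{S_0}\cap\widehat{T_0}=\widehat{S_0\cup T_0}$ is an identity, at the cost of a slightly less canonical map into the power; the paper's choice buys a cleaner map at the cost of the extension argument. Both routes are choice-free, and your closing remark correctly identifies the pitfall (selecting a single homomorphism for each finite $S$) that any proof here must avoid, since the whole point of the paper is to calibrate such hypotheses against ZF. One cosmetic slip: in the forward direction, the reason $\mathbb{B}'$ carries only finitely many relational tuples is that its own universe is finite and the type is finite; the finiteness of $A$ is irrelevant to that step.
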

\begin{proof}
First, suppose that $\mathbb{A}$ is compact.
Let $\mathbb{B}$ be a finite substructure of
$\mathbb{A}^I_{\mathcal{F}}$. For every
$R$ in $\sigma$ and every $(f_1, \ldots, f_k) \in R(\mathbb{B})$,
the set
$$S_{R,(f_1, \ldots, f_k)} =
\{ i \in I | (f_1(i), \ldots, f_k(i)) \in R(\mathbb{A}) \}$$
is an element of $\mathcal{F}$. Thus, the finite intersection
$$S_{\mathbb{B}} = \bigcap \{ S_{R,(f_1, \ldots, f_k)} |
R\in \sigma, (f_1, \ldots, f_k) \in R(\mathbb{B}) \}$$
is an element of  $\mathcal{F}$ hence it is not empty.
For every $i \in S_{\mathbb{B}}$, 
the map $\phi: \mathbb{B} \rightarrow \mathbb{A}$
defined by $\phi(f) = f(i)$ is a homomorphism, hence 
$\mathbb{B}$ admits a homomorphism to $\mathbb{A}$.
If $\mathbb{A}$ is compact, this implies that
$\mathbb{A}^I_{\mathcal{F}}$ admits a homomorphism to $\mathbb{A}$.

Now suppose that $\mathbb{A}^I_{\mathcal{F}}$ admits
a homomorphism to $\mathbb{A}$ for every filter $\mathcal{F}$
on a set $I$. Let $\mathbb{B}$ be a structure such that
every finite substructure of $\mathbb{B}$ admits a homomorphism to 
$\mathbb{A}$.
Let $I$ be the set of all maps from the universe of $\mathbb{B}$
to that of $\mathbb{A}$. For $R \in \sigma$ and $(x_1, \ldots, x_k) \in R(\mathbb{B})$,
let 
$$
(R,(x_1, \ldots, x_k))^+ = \{ i \in I | (i(x_1), \ldots, i(x_k)) \in R(\mathbb{A}) \}.
$$
For a finite collection $\{ (R_j,(x_{1,j}, \ldots, x_{k_j,j}))^+ | j = 1, \ldots, \ell\}$
of these sets, let $\mathbb{B}'$ be the substructure of $\mathbb{B}$ spanned by
$B' = \bigcup_{j=1}^{\ell} \{ x_{1,j}, \ldots, x_{k_j,j}\}$.
Then $\mathbb{B}'$ is a finite substructure of $\mathbb{B}$. By
hypothesis, there exists a homomorphism $\phi: \mathbb{B}' \rightarrow \mathbb{A}$. 
Any extension of such a homomorphism $\phi$ to the universe of $\mathbb{B}$
belongs to $\bigcap \{ (R_j,(x_{1,j}, \ldots, x_{k_j,j}))^+ | j = 1, \ldots, \ell\}$.
Thus, the family 
$\{ (R,(x_1, \ldots, x_k))^+ | R \in \sigma, (x_1, \ldots, x_k) \in R(\mathbb{B}) \}$
generates a filter $\mathcal{F}$ on $I$. By construction, the natural
map $\psi: \mathbb{B} \rightarrow \mathbb{A}^I_{\mathcal{F}}$ 
defined by $\psi(x) = f$, where $f(i) = i(x)$, is a homomorphism.
If there exists a homomorphism 
$\phi: \mathbb{A}^I_{\mathcal{F}} \rightarrow \mathbb{A}$,
then $\phi \circ \psi: \mathbb{B} \rightarrow \mathbb{A}$
is a homomorphism.
\end{proof}

In view of this result, the standard derivation of the compactness
of $\mathbb{A}$ from the ultrafilter axiom is a direct consequence
of the following result.

\begin{prop} \label{uf->hom}
If $\mathcal{F}$ is contained in an ultrafilter, 
then for every finite structure $\mathbb{A}$,
$\mathbb{A}^I_{\mathcal{F}}$ admits a homomorphism to $\mathbb{A}$.
\end{prop}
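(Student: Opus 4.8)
The plan is to use the hypothesis to fix an ultrafilter $\mathcal{U}$ on $I$ with $\mathcal{F} \subseteq \mathcal{U}$, and then build the homomorphism $\phi: \mathbb{A}^I_{\mathcal{F}} \to \mathbb{A}$ by letting $\mathcal{U}$ select, for each $f \in A^I$, the value that $f$ takes ``on a large set.'' First I would record the key finiteness observation: since $A$ is finite, any $f: I \to A$ partitions $I$ into the finitely many fibres $f^{-1}(a)$, $a \in A$. Because $\mathcal{U}$ is an ultrafilter and $I = \bigcup_{a \in A} f^{-1}(a)$ is a partition into finitely many pairwise disjoint pieces, \emph{exactly one} fibre belongs to $\mathcal{U}$: at least one piece of a finite union lying in $\mathcal{U}$ must itself lie in $\mathcal{U}$ (as $\mathcal{U}$ contains one of each complementary pair), and no two disjoint sets can both lie in $\mathcal{U}$, lest their empty intersection be in $\mathcal{U}$. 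I would then define $\phi(f)$ to be the unique $a \in A$ with $\{i \in I : f(i) = a\} \in \mathcal{U}$.

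It then remains to verify that $\phi$ is a homomorphism. For this I would fix $R \in \sigma$ of arity $k$ and a tuple $(f_1, \ldots, f_k) \in R(\mathbb{A}^I_{\mathcal{F}})$, and set $a_j = \phi(f_j)$. By the definition of the $\mathcal{F}$-tolerant power, the set $\{i : (f_1(i), \ldots, f_k(i)) \in R(\mathbb{A})\}$ lies in $\mathcal{F} \subseteq \mathcal{U}$, while each $\{i : f_j(i) = a_j\}$ lies in $\mathcal{U}$ by the construction of $\phi$. Intersecting these $k+1$ members of $\mathcal{U}$ yields another member of $\mathcal{U}$, which is therefore nonempty. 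Any index $i$ in this intersection satisfies $f_j(i) = a_j$ for every $j$ together with $(f_1(i), \ldots, f_k(i)) \in R(\mathbb{A})$, whence $(a_1, \ldots, a_k) = (\phi(f_1), \ldots, \phi(f_k)) \in R(\mathbb{A})$, as required.

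The only genuine subtlety lies in the well-definedness of $\phi$, and this is precisely where the finiteness of $\mathbb{A}$ is indispensable: the partition argument producing a unique ``majority value'' breaks down for infinite $A$, since then no single fibre need belong to $\mathcal{U}$. Everything past that point is the routine observation that $\mathcal{U}$ is closed under finite intersection and contains only nonempty sets. I therefore expect no serious obstacle beyond stating the finite-partition property of ultrafilters carefully and invoking the hypothesis to obtain $\mathcal{U}$ in the first place.
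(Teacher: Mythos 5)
Your proof is correct and follows essentially the same route as the paper: define $\phi(f)$ as the unique value whose fibre $f^{-1}(a)$ lies in the ultrafilter $\mathcal{U} \supseteq \mathcal{F}$, then intersect the $k+1$ relevant members of $\mathcal{U}$ to find a coordinate witnessing $(\phi(f_1),\ldots,\phi(f_k)) \in R(\mathbb{A})$. The only difference is that you spell out the finite-partition argument for existence and uniqueness of the chosen fibre, which the paper states without proof.
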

\begin{proof}
Let $\mathcal{U}$ be an ultrafilter containing $\mathcal{F}$.
Then for every $f$ in $A^I$, there is a unique $x$ in $A$
such that $f^{-1}(x) \in \mathcal{U}$. We put $\phi(f) = x$. 
We show that $\phi$ is a homomorphism. Let $R \in \sigma$ be a 
relation of arity $k$, and $(f_1, \ldots, f_k) \in R(\mathbb{A}^I_{\mathcal{F}})$.
Then 
$$S = \{i \in I| (f_1(i), \ldots, f_k(i)) \in R(\mathbb{A}) \} \in \mathcal{F}
\subseteq \mathcal{U}.$$
Therefore the set $S \cap \left ( \bigcap_{j=1}^k f_j^{-1}(\phi(f_j)) \right )$ is
in $\mathcal{U}$, hence it is not empty. For 
$i \in S \cap \left ( \bigcap_{j=1}^k f_j^{-1}(\phi(f_j)) \right )$, we have
\begin{equation}
(\phi(f_1), \ldots, \phi(f_k)) = (f_1(i), \ldots, f_k(i)) \in R(\mathbb{A}). \tag*{\qEd}
\end{equation}
\def\popQED{}
\end{proof}

Our results on compactness will use the characterisation
of Proposition~\ref{com=ft}, or its variant in terms of the natural
quotient discussed next.
Let $\sim$ be the equivalence defined on the universe of
$\mathbb{A}^I_{\mathcal{F}}$ by $f \sim g$ if 
$\{i \in I | f(i) = g(i)\} \in {\mathcal{F}}$.
Note that when $\mathcal{F}$ is an ultrafilter, 
$\mathbb{A}^I_{\mathcal{F}}/\!\sim$
is the standard ultrapower construction. 

In general, for $(f_1, \ldots, f_k ) \in R$ and $g_j \sim f_j , 
j = 1, \ldots, k$, we have $(g_1 , \ldots , g_k ) \in R$.
Thus, $\mathbb{A}^I_{\mathcal{F}}$ is the ``lexicographic sum'' of
the equivalence classes of $\sim$.
Any homomorphism from $\mathbb{A}^I_{\mathcal{F}}/\!\sim$ to $\mathbb{A}$
can be composed with the quotient map of $\mathbb{A}^I_{\mathcal{F}}$ to 
$\mathbb{A}^I_{\mathcal{F}}/\!\sim$. Conversely,
when $\mathbb{A}$ is finite, any homomorphism 
$\phi: \mathbb{A}^I_{\mathcal{F}} \rightarrow \mathbb{A}$ 
allows to define a
homomorphism $\psi: \mathbb{A}^I_{\mathcal{F}}/\!\sim \rightarrow \mathbb{A}$
as follows: given an ordering of the
universe of $\mathbb{A}$, define $\psi(x/\!\sim)$ to be the smallest 
$a$ such that there exists $y \in x/\!\sim$ with $\phi(y) = a$. 
Therefore we have the following
\begin{prop} \label{com=ftq}
Let $\mathbb{A}$ be a finite relational structure. Then $\mathbb{A}$
is compact if and only if for every set $I$ and every filter
$\mathcal{F}$ on $I$, $\mathbb{A}^I_{\mathcal{F}} /\!\sim$ admits
a homomorphism to $\mathbb{A}$.
\end{prop}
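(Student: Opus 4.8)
The plan is to derive this from Proposition~\ref{com=ft} by establishing, for each fixed set $I$ and filter $\mathcal{F}$, the equivalence: $\mathbb{A}^I_{\mathcal{F}}$ admits a homomorphism to $\mathbb{A}$ if and only if $\mathbb{A}^I_{\mathcal{F}}/\!\sim$ does. Chaining this equivalence with Proposition~\ref{com=ft} immediately yields the stated characterisation. Both directions rest on $\sim$ being a congruence, so the first step is to verify that the quotient map $q\colon \mathbb{A}^I_{\mathcal{F}} \to \mathbb{A}^I_{\mathcal{F}}/\!\sim$ is a homomorphism. If $(f_1,\ldots,f_k) \in R(\mathbb{A}^I_{\mathcal{F}})$ and $g_j \sim f_j$ for each $j$, then each set $\{i \in I \mid f_j(i) = g_j(i)\}$ lies in $\mathcal{F}$; intersecting these finitely many sets with $\{i \in I \mid (f_1(i),\ldots,f_k(i)) \in R(\mathbb{A})\}$ gives a member of $\mathcal{F}$ witnessing $(g_1,\ldots,g_k) \in R(\mathbb{A}^I_{\mathcal{F}})$. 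This is precisely the invariance recorded just before the proposition.

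For the easy direction, any homomorphism $\psi\colon \mathbb{A}^I_{\mathcal{F}}/\!\sim \to \mathbb{A}$ composes with $q$ to give a homomorphism $\psi \circ q\colon \mathbb{A}^I_{\mathcal{F}} \to \mathbb{A}$, with no additional hypothesis required. For the converse, suppose $\phi\colon \mathbb{A}^I_{\mathcal{F}} \to \mathbb{A}$ is a homomorphism. Fixing a linear ordering of the finite universe $A$, I would define $\psi(x/\!\sim)$ to be the least $a \in A$ for which some $y \sim x$ satisfies $\phi(y) = a$; this is well defined because $\{\phi(y) \mid y \sim x\}$ is a nonempty subset of the finite set $A$ and so has a least element. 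To check that $\psi$ is a homomorphism, given $(f_1/\!\sim,\ldots,f_k/\!\sim) \in R(\mathbb{A}^I_{\mathcal{F}}/\!\sim)$ I would select, for each of the finitely many indices $j$, a representative $g_j \sim f_j$ with $\phi(g_j) = \psi(f_j/\!\sim)$; the congruence property then gives $(g_1,\ldots,g_k) \in R(\mathbb{A}^I_{\mathcal{F}})$, and applying the homomorphism $\phi$ yields $(\psi(f_1/\!\sim),\ldots,\psi(f_k/\!\sim)) \in R(\mathbb{A})$, as required.

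The point demanding the most care is the definition of $\psi$ in the converse direction. Since the whole investigation proceeds without the axiom of choice, it is essential that the passage from $\phi$ to $\psi$ assign a value to every equivalence class by a single choice-free rule rather than by selecting a representative in each class separately, which for unboundedly many classes would invoke choice. The finiteness of $\mathbb{A}$ is exactly what supplies such a rule: a fixed ordering of $A$ lets me take the least attained value canonically, definably, and uniformly over all classes. I expect this to be the only genuine subtlety; the congruence check of the first step and the bookkeeping of composing homomorphisms are routine.
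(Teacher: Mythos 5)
Your proposal is correct and follows essentially the same route as the paper: the paper likewise deduces the proposition from Proposition~\ref{com=ft} by noting that $\sim$ is a congruence, composing any homomorphism $\mathbb{A}^I_{\mathcal{F}}/\!\sim\;\rightarrow\mathbb{A}$ with the quotient map in one direction, and in the other direction defining $\psi(x/\!\sim)$ as the smallest value of $\phi$ attained on the class $x/\!\sim$ under a fixed ordering of the finite universe $A$ -- precisely your choice-free rule. Your write-up only adds explicit verifications (the congruence check and that $\psi$ preserves relations) that the paper leaves implicit.
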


\section{Structures of width one}
Feder and Vardi~\cite{federvardi} described various heuristics for
finding homomorphisms to given relational structures. For such
a heuristic, it is desirable to characterise the class of structures
for which it is decisive. The simplest among these heuristics is 
the consistency-check algorithm of width one. This is the
intuitive heuristic that it is unconsciously adopted by Sudoku 
puzzle solvers with no scientfic background. 
The structures for which width one consistency-check is decisive are called structures of 
width one, or structures with tree duality. We refer the reader
to~\cite{federvardi} for details. We will use the non-algorithmic
structural characterisation given below.

For a structure $\mathbb{A}$, let $\mathcal{P}(\mathbb{A})$
be the structure defined as follows. 
The universe of $\mathcal{P}(\mathbb{A})$
is the set of nonempty subsets of the universe $A$ of $\mathbb{A}$.
For $R \in \sigma$ of arity $k$, $R(\mathcal{P}(\mathbb{A}))$
consists of the $k$-tuples $(S_1, \ldots, S_k)$ such that 
$\mbox{pr}_i(R(\mathbb{A}) \cap (\Pi_{i=1}^k S_i)) = S_i$ 
for $i = 1, \ldots, k$. (Where $\mbox{pr}_i$ is the $i$-th projection.)
A structure $\mathbb{A}$ is said to have 
{\em width one} if there exists a homomorphism from $\mathcal{P}(\mathbb{A})$ 
to $\mathbb{A}$. 
\begin{prop} 
Every structure of width one is compact. 
\end{prop}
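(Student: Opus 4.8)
The plan is to invoke the characterisation of Proposition~\ref{com=ft}: it suffices to show that for every set $I$ and every filter $\mathcal{F}$ on $I$, the tolerant power $\mathbb{A}^I_{\mathcal{F}}$ admits a homomorphism to $\mathbb{A}$. Since $\mathbb{A}$ has width one, we are handed a homomorphism $h\colon \mathcal{P}(\mathbb{A}) \to \mathbb{A}$, so it is enough to construct a homomorphism $T\colon \mathbb{A}^I_{\mathcal{F}} \to \mathcal{P}(\mathbb{A})$ and compose, taking $h \circ T$ as the desired map.

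Call a subset $S \subseteq I$ \emph{$\mathcal{F}$-positive} if $S \cap F \neq \emptyset$ for every $F \in \mathcal{F}$. For $f \in A^I$ I would set $T(f) = \{ a \in A : f^{-1}(a) \text{ is } \mathcal{F}\text{-positive}\}$. First I would check that $T(f)$ is a nonempty subset of $A$, so that it lies in the universe of $\mathcal{P}(\mathbb{A})$: if every fibre $f^{-1}(a)$ missed some $F_a \in \mathcal{F}$, then the finite intersection $\bigcap_{a \in A} F_a$ would belong to $\mathcal{F}$ yet meet no fibre, hence be empty, a contradiction. This step uses only the finiteness of $A$ and the closure of $\mathcal{F}$ under finite intersections.

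The heart of the matter is to verify that $T$ is a homomorphism. I would isolate three elementary facts about $\mathcal{F}$-positivity: a superset of a positive set is positive; the intersection of a positive set with a member of $\mathcal{F}$ is positive; and, crucially, if a positive set is partitioned into finitely many pieces then at least one piece is positive (the same finite-intersection argument as above). Now suppose $(f_1, \ldots, f_k) \in R(\mathbb{A}^I_{\mathcal{F}})$, so that $S = \{ i \in I : (f_1(i), \ldots, f_k(i)) \in R(\mathbb{A})\} \in \mathcal{F}$. To show $(T(f_1), \ldots, T(f_k)) \in R(\mathcal{P}(\mathbb{A}))$ I must produce, for each coordinate $\ell$ and each $a \in T(f_\ell)$, a tuple of $R(\mathbb{A})$ with $\ell$-th entry $a$ and $j$-th entry in $T(f_j)$ for all $j$. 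Since $f_\ell^{-1}(a)$ is positive and $S \in \mathcal{F}$, the set $P = S \cap f_\ell^{-1}(a)$ is positive. Partitioning $P$ according to the finitely many values of the map $i \mapsto (f_1(i), \ldots, f_k(i))$ yields a positive piece $P_t$ on which this map is constantly equal to some $t = (b_1, \ldots, b_k)$. Then $t \in R(\mathbb{A})$ (as $P_t \subseteq S$), $b_\ell = a$ (as $P_t \subseteq f_\ell^{-1}(a)$), and each fibre $f_j^{-1}(b_j) \supseteq P_t$ is positive, whence $b_j \in T(f_j)$. This is exactly the required witness; since the inclusion $\mathrm{pr}_i(R(\mathbb{A}) \cap \prod_j T(f_j)) \subseteq T(f_i)$ always holds, $T$ is a homomorphism, and $h \circ T$ completes the proof.

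The main obstacle is precisely the ``positive piece'' step: one must extract a single index $i$ whose image tuple has all of its coordinates lying in the respective sets $T(f_j)$, and the finite-partition lemma is what makes this possible. I would finally remark that the entire construction is explicit---the map $T$ is defined outright and $h$ is supplied by the width-one hypothesis, with no appeal to positivity of a chosen ultrafilter---so the argument goes through in Zermelo--Fraenkel set theory, as required for a structure whose compactness is to be established without the ultrafilter axiom.
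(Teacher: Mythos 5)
Your proposal is correct and takes essentially the same approach as the paper: your map $T$ coincides exactly with the paper's map $\phi$ (the smallest set $S$ with $f^{-1}(S) \in \mathcal{F}$ is precisely the set of $a \in A$ whose fibre $f^{-1}(a)$ is $\mathcal{F}$-positive), and both proofs conclude by composing with the width-one homomorphism from $\mathcal{P}(\mathbb{A})$. The only difference is presentational: where you verify the homomorphism property through the finite-partition lemma for $\mathcal{F}$-positive sets, the paper argues dually, intersecting $S$ with the preimages $f_i^{-1}(\phi(f_i))$ to get a set in $\mathcal{F}$ and invoking minimality of $\phi(f_i)$ --- the two verifications are equivalent.
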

\begin{proof}
Let $\mathbb{A}$ be a structure of width one.
We will show that for every filter $\mathcal{F}$
on a set $I$, there exists a homomorphism from
$\mathbb{A}^I_{\mathcal{F}}$ to $\mathbb{A}$.

For $f$ in the universe of $\mathbb{A}^I_{\mathcal{F}}$, put 
$$\mathcal{S}(f) = \{ S \subseteq A : f^{-1}(S) \in \mathcal{F}\}$$
(where $A$ is the universe of $\bA$).
In particular, $A \in \mathcal{S}(f)$, $\emptyset \not \in \mathcal{S}(f)$,
and $\mathcal{S}(f)$ is closed under intersections. 
Since $A$ is finite, $\mathcal{S}(f)$ contains a smallest
member $\phi(f)$.
We show that $\phi: \mathbb{A}^I_{\mathcal{F}} \rightarrow \mathcal{P}(\mathbb{A})$
is a homomorphism. 

Let $R \in \sigma$ be a relation of arity $k$,
and $(f_1, \ldots, f_k)$ an element of $R(\mathbb{A}^I_{\mathcal{F}})$.
Put 
$$S = \{ i \in I : (f_1(i), \ldots, f_k(i)) \in R(\mathbb{A})\}.$$
Then $S$ is in $\mathcal{F}$, hence 
$T = S \cap \left (\bigcap_{i=1}^k f_i^{-1}(\phi(f_i)) \right ) \in \mathcal{F}$.
For $i = 1, \ldots, k$, we have $f_i(T) \subseteq \phi(f_i)$,
but by minimality of $\phi(f_i)$, the inclusion cannot be strict since 
$f_i^{-1}(f_i(T)) \in \mathcal{F}$. Thus every
$x \in \phi(f_i)$ is the $i$-th coordinate of some
$(f_1(t), \ldots, f_k(t))$ in $R(\mathbb{A})$, with $t \in T$ 
and $f_j(t) \in \phi(f_j)$ for $j = 1, \ldots, k$. In other words,
$\mbox{pr}_i(R(\mathbb{A}) \cap (\Pi_{i=1}^k \phi(f_i))) = \phi(f_i)$ 
for $i = 1, \ldots, k$, whence 
$(\phi(f_1), \ldots, \phi(f_k)) \in R(\mathcal{P}(\mathbb{A}))$.
This shows that $\phi$ is a homomorphism. 

Since $\mathbb{A}$ has width one, there exists a homomorphism
$\psi$ of $\mathcal{P}(\mathbb{A}) \rightarrow \mathbb{A}$.
The composition
$\psi \circ \phi: \mathbb{A}^I_{\mathcal{F}}\rightarrow \mathbb{A}$
is then a homomorphism.
Thus, for every filter $\mathcal{F}$
on a set $I$, there exists a homomorphism from
$\mathbb{A}^I_{\mathcal{F}}$ to $\mathbb{A}$.
By Proposition~\ref{com=ft}, this implies that $\mathbb{A}$
is compact.
\end{proof}
It is not clear whether other structures can be proved to be compact
within the axioms of Zermelo-Fraenkel.
\begin{prob}
Let $\mathbb{A}$ be a structure for which compactness follows
from the axioms of Zermelo-Fraenkel. Does $\mathbb{A}$
necessarily have width one?
\end{prob}

\section{Compactness results equivalent to the ultrafilter axiom} 
In graph theory, the De Bruijn-Erd\H{o}s theorem is the statement that a
graph is $k$-colourable if and only if all of its finite subgraphs
are $k$-colourable. In our terminology, this is the statement
that the complete graphs are compact.  (The complete graph $\bK_n$ on $n$ vertices is 
the structure with universe $\{0, \ldots, n-1\}$ and the binary adjacency relation $\neq$.)
Various proofs were known in the early fifties. Then in 1971, L\"{a}uchli~\cite{lauchli} 
proved that the ultrafilter axiom is a consequence of the compactness of $\bK_n$
for any $n \geq 3$.

In this section, we present our proof of L\"{a}uchli's result, and extend it
to many relational structures using primitive positive definability. Incidentally,
the complete graphs with at least three elements correspond to the first constraint 
satisfaction problems that were shown to be NP-complete.

\begin{lem} \label{hom->uf}
Let $n \geq 3$ be an integer and $\mathcal{F}$ a filter on set $I$.
If $\left(\bK_n\right)^I_{\mathcal{F}}$
admits a homomorphism to $\bK_n$,
then $\mathcal{F}$ is contained in an ultrafilter.
\end{lem}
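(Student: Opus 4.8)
The plan is to read a homomorphism $\phi\colon (\bK_n)^I_{\mathcal{F}}\to\bK_n$ as a proper $n$-colouring of $A^I=\{0,\dots,n-1\}^I$ in which $f$ and $g$ must receive different colours whenever $\{i\in I: f(i)\neq g(i)\}\in\mathcal{F}$, and to extract an ultrafilter containing $\mathcal{F}$ from it. First I would normalise $\phi$ on the constant functions $\bar 0,\dots,\overline{n-1}$: any two of these differ on all of $I\in\mathcal{F}$, so they are pairwise adjacent and get distinct colours, whence $\phi$ is a bijection on them. Since every permutation of $\{0,\dots,n-1\}$ is an automorphism of $\bK_n$, I may compose with one and assume $\phi(\bar a)=a$. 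Writing $\chi_X$ for the function equal to $1$ on $X$ and $0$ off $X$, a comparison of $\chi_X$ with the constants $\bar c$ ($c\geq 2$), which it disagrees with everywhere, forces $\phi(\chi_X)\in\{0,1\}$; and $\chi_X,\chi_{X^c}$ disagree everywhere, so exactly one of them has colour $1$. I therefore set $\mathcal{U}=\{X\subseteq I:\phi(\chi_X)=1\}$, which contains exactly one of each complementary pair. Comparing $\chi_X$ with $\bar 0$ shows that $X\in\mathcal{F}$ (disagreement set $X\in\mathcal{F}$) forces $\phi(\chi_X)\neq 0$, so $\mathcal{F}\subseteq\mathcal{U}$, while $\phi(\chi_\emptyset)=\phi(\bar 0)=0$ gives $\emptyset\notin\mathcal{U}$.

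With these easy facts in hand, verifying that $\mathcal{U}$ is an ultrafilter reduces entirely to one point: closure under finite intersection. Indeed, given intersection-closure and $\emptyset\notin\mathcal{U}$, upward closure is automatic (if $X\in\mathcal{U}$, $X\subseteq Z$, and $Z\notin\mathcal{U}$, then $Z^c\in\mathcal{U}$ and $X\cap Z^c=\emptyset\in\mathcal{U}$, a contradiction), and maximality is already built in. So the whole content is to prove that $X,Y\in\mathcal{U}$ implies $X\cap Y\in\mathcal{U}$. This is exactly where $n\geq 3$ should enter, through a three-valued function: I would consider $\chi_X+\chi_Y$, which takes the value $2$ precisely on $X\cap Y$, the value $1$ on $X\triangle Y$, and $0$ off $X\cup Y$. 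The natural route is a coherence statement asserting that $\phi(f)$ is always a value taken by $f$ on a set in $\mathcal{U}$, i.e. $f^{-1}(\phi(f))\in\mathcal{U}$; applied to $f=\chi_X+\chi_Y$ its three fibres are $X\cap Y$, $X\triangle Y$, and $(X\cup Y)^c$, so it would suffice to rule out the last two and conclude $\phi(\chi_X+\chi_Y)=2$, giving $X\cap Y\in\mathcal{U}$.

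The hard part will be precisely this conversion of colour inequalities into filter consistency. The homomorphism only forbids equal colours on pairs whose disagreement set lies in $\mathcal{F}$, yet the sets $X,Y,X\cup Y$ arising here are only known to be $\mathcal{U}$-large, not $\mathcal{F}$-large, so a single comparison of $\chi_X+\chi_Y$ with $\chi_X$, $\chi_Y$, or a constant does not by itself produce an edge. I expect the resolution to exploit that $\bK_n$ with $n\geq 3$ contains triangles, so that three functions that pairwise disagree on $\mathcal{F}$-sets are forced onto three distinct colours; the task is to engineer such a triangle, built from $\chi_X+\chi_Y$ together with functions that genuinely differ on members of $\mathcal{F}$, whose colouring is incompatible with the offending case $X,Y\in\mathcal{U}$ but $X\cap Y\notin\mathcal{U}$. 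Equivalently, I would argue by contradiction, turning a failure of intersection-closure into an explicit finite substructure of some filtered power that admits no $n$-colouring. Once intersection-closure is secured, $\mathcal{U}$ is an ultrafilter extending $\mathcal{F}$, which is the conclusion; the delicate, genuinely set-theoretic step, and the only place where $n=2$ is insufficient, is this combinatorial heart of Läuchli's argument.
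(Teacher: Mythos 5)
Your setup is correct and coincides with the published argument in all the easy steps: normalizing $\phi$ on the constant functions, forcing $\phi(\chi_X)\in\{0,1\}$ by comparison with the constants $\bar c$, $c\ge 2$, defining $\mathcal{U}=\{X:\phi(\chi_X)=1\}$, showing $\mathcal{F}\subseteq\mathcal{U}$ and that $\mathcal{U}$ contains exactly one of each complementary pair; and your reduction of everything to intersection-closure (deriving upward closure from it via $X\cap Z^c=\emptyset\notin\mathcal{U}$) is valid. But the proof stops exactly where the content of the lemma lies: you never prove that $X,Y\in\mathcal{U}$ implies $X\cap Y\in\mathcal{U}$. The ``coherence statement'' $f^{-1}(\phi(f))\in\mathcal{U}$ that you propose to apply to $f=\chi_X+\chi_Y$ is not available to you: for three-valued $f$ it is essentially equivalent to the intersection-closure being proved, and nothing established so far yields it. Your final paragraph (``I expect the resolution to exploit\ldots the task is to engineer such a triangle'') correctly describes what is missing, but it is a plan, not a proof; as submitted, the combinatorial heart of L\"{a}uchli's argument is absent.

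For the record, the device that fills the gap resolves precisely the worry you raise about edges: one builds functions that differ at \emph{every} coordinate of $I$, so that adjacency in $(\bK_n)^I_{\mathcal{F}}$ holds unconditionally (only $I\in\mathcal{F}$ is needed), with no largeness assumption on $X$ or $Y$. Concretely, partition $I$ into $X\cap Y$, $X\setminus Y$, $\overline{X}$ and define three functions taking the cyclically shifted value patterns $(0,1,2)$, $(2,0,1)$, $(1,2,0)$ on these three parts. They differ pairwise everywhere, hence form a triangle and receive three distinct colours, all lying in $\{0,1,2\}$ by adjacency to the constants $\bar k$, $k\ge 3$. Each is also adjacent (again by everywhere-disagreement) to a characteristic function whose colour is already known or deducible: $\mathbbm{1}_{\overline{X}}$ has colour $0$ since $X\in\mathcal{U}$, and $\mathbbm{1}_{X\setminus Y}$ has colour $0$ because $X\setminus Y\subseteq\overline Y\notin\mathcal{U}$ --- a step which in the paper uses upward closure, proved beforehand by yet another everywhere-differing pair of $\{1,2\}$-valued functions, so your plan to get upward closure for free would force you to rework this point (it can be reworked: compare the second triangle function with $\mathbbm{1}_{\overline Y}$ instead, which also differs from it everywhere). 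These constraints force the function attached to $X\cap Y$ to have colour $0$, whence $\phi(\mathbbm{1}_{X\cap Y})\ne 0$, so $\phi(\mathbbm{1}_{X\cap Y})=1$ and $X\cap Y\in\mathcal{U}$. Without this (or an equivalent) construction, your proposal does not prove the lemma.
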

\begin{proof}
Let $\phi: \left(\bK_n\right)^I_{\mathcal{F}} \rightarrow \bK_n$ be a
homomorphism.  We write $\idm{k}$ 
for the constant function with constant value $k \in \{0, \ldots, n-1\}$. 
Since the restriction of $\phi$ to the constant functions is bijective,
we can assume without loss of generality that $\phi(\idm{k}) = k$ for
all $k \in \{0, \ldots, n-1\}$.
If $n > 3$,
every element of $\left(\bK_3\right)^I_{\mathcal{F}} \subseteq 
\left(\bK_n\right)^I_{\mathcal{F}}$ is adjacent to $\idm{k}$ for all
$k \geq 3$. Therefore the restriction of $\phi$ to $\left(\bK_3\right)^I_{\mathcal{F}}$
is an idempotent homomorphism to $\bK_3$. Therefore we can assume that $n=3$.

For $X \subseteq I$, we write $\mathbbm{1}_X$ for the characteristic 
map of $X$, that is, $\mathbbm{1}_X(i) = 1$ if $i \in X$ and 
$\mathbbm{1}_X(i) = 0$ otherwise.
Then $\mathbbm{1}_X$ is adjacent to $\idm{2}$, therefore 
$\phi(\mathbbm{1}_X) \in \{0,1\}$ for all $X \subseteq I$. Put  
$$\mathcal{U} = \{ X | \phi(\mathbbm{1}_X) = 1\}.$$
We will show that $\mathcal{U}$ is an ultrafilter containing 
$\mathcal{F}$.

For $F \in \mathcal{F}$, $\mathbbm{1}_F$ is adjacent to $\idm{0}$. 
Since $\phi(\idm{0}) = 0$, we have $\phi(\mathbbm{1}_F) = 1$. Thus, 
$\mathcal{F} \subseteq \mathcal{U}$. 
Also, for any $X \subseteq I$,
$\mathbbm{1}_X$, $\mathbbm{1}_{\overline{X}}$ and $\idm{2}$ are
mutually adjacent (where $\overline{X}$ denotes the complement of $X$).
Since $\phi(\idm{2}) = 2$, we must have
$\{\phi(\mathbbm{1}_X),\phi(\mathbbm{1}_{\overline{X}})\} = \{0,1\}$,
that is, $\mathcal{U}$ contains precisely one of $X$ and $\overline{X}$.

For $X \in \mathcal{U}$, define 
$f_X, g_X: I \rightarrow \{ 1, 2\}$ 
by
$$(f_X(i),g_X(i)) = \left \{
\begin{array}{l}
(2,1) \mbox{ if $i \in X$}, \\
(1,2) \mbox{ otherwise}.
\end{array}
\right.$$
Then $f_X$ is adjacent to $\idm{0}$ and $\mathbbm{1}_X$, thus
$\phi(f_X) = 2$. Since $g_X$  is adjacent to $\idm{0}$ and $f_X$,
we then have $\phi(g_X) = 1$. Now for any $Y \subseteq I$ containing $X$,
$\mathbbm{1}_{\overline{Y}}$ is adjacent to $g_X$, thus 
$\phi(\mathbbm{1}_{\overline{Y}}) = 0$ and $\phi(\mathbbm{1}_{Y}) = 1$. 
This shows that if $Y$ contains $X \in \mathcal{U}$, then $Y \in \mathcal{U}$.

For $X, Y \in \mathcal{U}$, define 
$f_{X\cap Y}, f_{X \setminus Y}, f_{\overline{X}}: I \rightarrow \{0, 1, 2\}$ by
$$(f_{X\cap Y}(i),f_{X \setminus Y}(i),f_{\overline{X}}(i)) = \left \{
\begin{array}{l}
(0,1,2) \mbox{ if $i \in X \cap Y$}, \\
(2,0,1) \mbox{ if $i \in X \setminus Y$}, \\
(1,2,0) \mbox{ if $i \in \overline{X}$}.
\end{array}
\right.$$
Then $f_{X\cap Y}, f_{X \setminus Y}$ and $f_{\overline{X}}$ are 
mutually adjacent. Therefore 
$$\{\phi(f_{X\cap Y}), \phi(f_{X \setminus Y}), \phi(f_{\overline{X}}) \} 
= \{0,1,2\}.$$ 
Since $f_{\overline{X}}$ is adjacent to $\mathbbm{1}_{\overline{X}}$
and $\phi(\mathbbm{1}_{\overline{X}}) = 0$, we have 
$\phi(f_{\overline{X}}) \neq 0$.
Similarly, $X \setminus Y \subseteq \overline{Y}$ whence 
$\phi(\mathbbm{1}_{X \setminus Y}) = 0$, and $\mathbbm{1}_{X \setminus Y}$ is
adjacent to $f_{X \setminus Y}$, so that $\phi(f_{X \setminus Y}) \neq 0$.
Therefore $\phi(f_{X\cap Y}) = 0$. Since $\mathbbm{1}_{X \cap Y}$ is adjacent to
$f_{X\cap Y}$, we then have $\phi(\mathbbm{1}_{X \cap Y}) = 1$, that is,
$X \cap Y \in \mathcal{U}$.
This shows that $\mathcal{U}$ is an ultrafilter.
\end{proof}
The above proof is essentially the correspondence between the $n$-colourings 
of powers of $\bK_n$, $n \geq 3$ and the 0-1 measures on a set
established by Greenwell and Lov\'asz~\cite{greenwelllovasz}.

\begin{cor}[L\"{a}uchli \cite{lauchli}] \label{uf<->knc}
For every $n \geq 3$, the ultrafilter axiom is equivalent 
to the statement that $\bK_n$ is compact.
\end{cor}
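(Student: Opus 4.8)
The plan is to prove the equivalence between the ultrafilter axiom and the compactness of $\bK_n$ by establishing both implications, each of which can be assembled almost entirely from results already in hand. The forward direction is the easier half: if we assume the ultrafilter axiom, then every filter $\mathcal{F}$ on any set $I$ is contained in an ultrafilter, so by Proposition~\ref{uf->hom} the $\mathcal{F}$-tolerant power $(\bK_n)^I_{\mathcal{F}}$ admits a homomorphism to $\bK_n$. Since this holds for every set $I$ and every filter $\mathcal{F}$, Proposition~\ref{com=ft} immediately yields that $\bK_n$ is compact. So this implication is just a two-line concatenation of the preceding propositions.

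For the reverse direction, the goal is to derive the full ultrafilter axiom from the hypothesis that $\bK_n$ is compact. First I would take an arbitrary set $I$ and an arbitrary filter $\mathcal{F}$ on $I$; it suffices to show $\mathcal{F}$ extends to an ultrafilter, since an arbitrary filter is exactly what the ultrafilter axiom must handle. Compactness of $\bK_n$ gives, via Proposition~\ref{com=ft}, a homomorphism from $(\bK_n)^I_{\mathcal{F}}$ to $\bK_n$ for this particular $I$ and $\mathcal{F}$. Now the key step is to invoke Lemma~\ref{hom->uf}: the existence of such a homomorphism forces $\mathcal{F}$ to be contained in an ultrafilter. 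Combining these, every filter on every set extends to an ultrafilter, which is precisely the ultrafilter axiom.

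The structure of the argument is therefore entirely modular, and the genuinely hard work has already been done in Lemma~\ref{hom->uf}, where the explicit combinatorial construction of the ultrafilter $\mathcal{U} = \{X \mid \phi(\mathbbm{1}_X) = 1\}$ from a homomorphism $\phi$ is carried out. The only point requiring a moment's care is to be sure that the hypothesis ``$\bK_n$ is compact'' is being applied to an \emph{arbitrary} filter on an \emph{arbitrary} index set, so that the conclusion genuinely covers every filter rather than some restricted family; this is why the characterisation in Proposition~\ref{com=ft} (a statement quantified over all $I$ and all $\mathcal{F}$) is exactly the right tool. No obstacle beyond this bookkeeping is anticipated, since Lemma~\ref{hom->uf} supplies the measure-theoretic heart of the result and the two propositions supply the translation between compactness and tolerant powers.

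Finally I would remark that the equivalence is uniform in $n$: the case $n > 3$ in Lemma~\ref{hom->uf} is reduced to $n = 3$ by restricting $\phi$ to $(\bK_3)^I_{\mathcal{F}}$, so the same proof delivers the corollary for every $n \geq 3$ simultaneously.
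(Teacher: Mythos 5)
Your proposal is correct and is exactly the derivation the paper intends: the corollary is stated without explicit proof precisely because it follows by combining Proposition~\ref{com=ft} with Proposition~\ref{uf->hom} for one direction and with Lemma~\ref{hom->uf} for the other, which is the assembly you describe. Nothing is missing, and your remark about the reduction from $n>3$ to $n=3$ matches the handling inside the lemma itself.
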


We can then expand the class of structures for which compactness is equivalent
to the ultrafilter axiom by using primitive positive definitions.
The same method also allows reductions amongst constraint satisfaction problems.
Let $\sigma$ be a type and $\bA$ a $\sigma$-structure with universe
$\{0, \ldots, n-1\}$, that is, the same universe as that of $\bK_n$. 
For a $\sigma$-structure $\bB$ with distinguished
elements $x, y$, the binary relation 
$R_{(\bB,x,y)} \subseteq \{0, \ldots, n-1\}^2$ is defined by
$$
R_{(\bB,x,y)} = \{ (\phi(x), \phi(y)) \  |\  \phi: \bB \rightarrow \bA 
\mbox{ is a homomorphism }\}.
$$
Then $\bK_n$ is said to be {\em primitively positively definable}
from $\bA$ if for some $(\bB,x,y)$,  the adjacency relation
$\neq$ of $\bK_n$ coincides with $R_{(\bB,x,y)}$.
For instance, if $\bB$ is an undirected path with three edges
and $x, y$ are its endpoints, then $R_{(\bB,x,y)}$ on the 
undirected cycle $\bA$ with five vertices is the adjacency 
relation of $\bK_5$.

\begin{prop} Let $\bA$ be a structure with universe
$\{0, \ldots, n-1\}$, where $n \geq 3$. If $\bK_n$ is primitively
positively definable from  $\bA$, then the ultrafilter axiom 
is equivalent to the statement that $\bA$ is compact.
\end{prop}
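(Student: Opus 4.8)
The statement splits into two implications, and the plan is to treat them separately, since one is immediate and the other carries all the content. The implication from the ultrafilter axiom to the compactness of $\bA$ is the easy half and in fact holds for every finite structure: under the ultrafilter axiom every filter $\cF$ on a set $I$ is contained in an ultrafilter, so Proposition~\ref{uf->hom} provides a homomorphism $\bA^I_{\cF} \to \bA$, and Proposition~\ref{com=ft} then gives that $\bA$ is compact. The real work is the reverse implication, that compactness of $\bA$ forces the ultrafilter axiom.

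For this direction the plan is to transfer compactness from $\bA$ to $\bK_n$ through the primitive positive definition, and then invoke Corollary~\ref{uf<->knc}. So fix a set $I$ and a filter $\cF$ on $I$; by Proposition~\ref{com=ftq} there is a homomorphism $\bar g : \bA^I_{\cF}/\!\sim \,\to \bA$, and the goal is to build from it a homomorphism $(\bK_n)^I_{\cF} \to \bK_n$. Since the defining pair $(\bB,x,y)$ satisfies $R_{(\bB,x,y)} = \,\neq$, for every pair $a \neq b$ in $\{0,\dots,n-1\}$ there is at least one homomorphism $\psi : \bB \to \bA$ with $\psi(x)=a$ and $\psi(y)=b$; as there are only finitely many such pairs, I fix one such $\psi_{(a,b)}$ for each, a finite selection that is available in ZF without any appeal to choice.

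Now take any edge $(f_1,f_2) \in \,\neq\!\big((\bK_n)^I_{\cF}\big)$, so that $S = \{\, i \in I : f_1(i) \neq f_2(i)\,\} \in \cF$. For each element $b$ of the universe $B$ of $\bB$, define $F_b \in A^I$ by $F_b(i) = \psi_{(f_1(i),f_2(i))}(b)$ for $i \in S$ and $F_b(i) = 0$ otherwise. The key verification is that $b \mapsto F_b$ is a homomorphism $\bB \to \bA^I_{\cF}$: for $R \in \sigma$ and $(b_1,\dots,b_k) \in R(\bB)$, at every $i \in S$ the tuple $(F_{b_1}(i),\dots,F_{b_k}(i))$ equals $(\psi_{(f_1(i),f_2(i))}(b_1),\dots,\psi_{(f_1(i),f_2(i))}(b_k))$, which lies in $R(\bA)$ because $\psi_{(f_1(i),f_2(i))}$ is a homomorphism, so the relevant index set contains $S \in \cF$ and hence belongs to $\cF$. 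Composing with the quotient map and then with $\bar g$ yields a homomorphism $\Phi : \bB \to \bA$, $\Phi(b) = \bar g\big([F_b]\big)$, so by the defining property $\Phi(x) \neq \Phi(y)$. Finally $F_x$ agrees with $f_1$ on $S$ and $F_y$ with $f_2$ on $S$, so $[F_x]=[f_1]$ and $[F_y]=[f_2]$ (the relation $\sim$ depends only on $\cF$, so it is the same on $A^I$ read as $\bA^I_{\cF}$ or as $(\bK_n)^I_{\cF}$); hence $\bar g([f_1]) = \Phi(x) \neq \Phi(y) = \bar g([f_2])$. This shows that $h(f) = \bar g([f])$ is a homomorphism $(\bK_n)^I_{\cF} \to \bK_n$. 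As $\cF$ was arbitrary, Proposition~\ref{com=ft} makes $\bK_n$ compact, and Corollary~\ref{uf<->knc} then gives the ultrafilter axiom.

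I expect the main obstacle to be the behaviour off the set $S$. One cannot define $F_x = f_1$ and $F_y = f_2$ on all of $I$, because wherever $f_1(i)=f_2(i)=c$ the identity $R_{(\bB,x,y)} = \,\neq$ forbids any homomorphism $\bB \to \bA$ sending both $x$ and $y$ to $c$, so no consistent gadget exists at such indices. This is exactly why the argument must be routed through the $\sim$-quotient of Proposition~\ref{com=ftq} rather than through a bare homomorphism $\bA^I_{\cF} \to \bA$: it is only up to $\sim$ that $F_x$ and $F_y$ can be identified with $f_1$ and $f_2$. The one point to watch, essential because the whole statement is meant to be a theorem of ZF, is that the selection of the gadget homomorphisms $\psi_{(a,b)}$ ranges over only finitely many pairs and so uses nothing beyond the finite choice provable in ZF.
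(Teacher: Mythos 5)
Your proof is correct, and its overall route is the paper's: the easy direction via Proposition~\ref{uf->hom} and Proposition~\ref{com=ft}, and the hard direction by transferring compactness from $\bA$ to $\bK_n$ through the gadget $(\bB,x,y)$ and then invoking Corollary~\ref{uf<->knc}. The one genuine difference is your detour through the quotient of Proposition~\ref{com=ftq}: because you pad the gadget functions with the value $0$ off $S$, the elements $F_x$ and $F_y$ agree with $f_1$ and $f_2$ only up to $\sim$, so you need the homomorphism to be defined on $\bA^I_{\mathcal F}/\!\sim$ to absorb the discrepancy. The paper avoids the quotient entirely by padding differently: it sets $h(i)=\psi_{(f(i),g(i))}(z)$ for $i\in J_{(f,g)}$ and $h(i)=f(i)$ otherwise, and since $f(i)=g(i)$ off $J_{(f,g)}$, this yields $\psi_{(f,g)}(x)=f$ and $\psi_{(f,g)}(y)=g$ \emph{exactly}, so a bare homomorphism $\bA^I_{\mathcal F}\rightarrow\bA$ suffices. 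Consequently, your closing claim --- that no consistent gadget can send $x$ to $f_1$ and $y$ to $f_2$ on all of $I$, and that the argument \emph{must} be routed through the $\sim$-quotient --- is mistaken: it tacitly assumes the gadget map must be a homomorphism to $\bA$ coordinatewise at every index, whereas in a tolerant power the relations only need to hold on a set belonging to $\mathcal F$; off $J_{(f,g)}$ one may simply send all of $B$ to the single value $f(i)=g(i)$. Both treatments are sound; the paper's is slightly leaner, while yours costs only an appeal to Proposition~\ref{com=ftq}, which the paper has already established.
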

\begin{proof}
Suppose that $\phi: \bA^I_{\mathcal{F}} \rightarrow \bA$
is a homomorphism. Then $\phi$ is a map from the universe
of $\left(\bK_n\right)^I_{\mathcal{F}}$
to that of $\bK_n$. We will show that $\phi$ is a homomorphism
of $\left(\bK_n\right)^I_{\mathcal{F}}$ to $\bK_n$.

By hypothesis, the adjacency relation $\neq$ of $\bK_n$ coincides 
with some $R_{(\bB,x,y)}$. For each $i, j \in \{0, \ldots, n-1\}$
with $i \neq j$, we can fix $\psi_{(i,j)}: \bB \rightarrow \bA$
such that $\psi_{(i,j)}(x) = i$ and $\psi_{(i,j)}(y) = j$.

Now let $f, g$ be adjacent elements of the universe of 
$\left(\bK_n\right)^I_{\mathcal{F}}$, and put
\[
J_{(f,g)} = \{ i \in~I \ | \ f(i) \neq g(i)\}.
\]
Consider the map
$\psi_{(f,g)}: \bB \rightarrow \bA^I_{\mathcal{F}}$
defined by $\psi_{(f,g)}(z) = h$, where
$$
h(i) = \left \{ \begin{array}{l}
\psi_{(f(i),g(i))}(z) \mbox{ if $i \in J_{(f,g)}$,} \\
f(i) \mbox{ otherwise}.
\end{array} \right.
$$
Then $\psi_{(f,g)}$ is a homomorphism from $\bB$ to $\bA^I_{\mathcal{F}}$
since the set of coordinates on which all relations are preserved
contains $J_{(f,g)}$, which is a member of $\mathcal{F}$. Therefore
$\phi \circ \psi_{(f,g)}: \bB \rightarrow \bA$ is a homomorphism.
Since $R_{(\bB,x,y)}$ is the adjacency relation of $\bK_n$,
we then have 
$$
\phi(f) = \phi \circ \psi_{(f,g)}(x) \neq \phi \circ \psi_{(f,g)}(y) = \phi(g).
$$
This shows that $\phi$ is a homomorphism
of $\left(\bK_n\right)^I_{\mathcal{F}}$ to $\bK_n$.

Therefore, if $\bA$ is compact, then $\bK_n$ is compact, and the ultrafilter
axiom holds.
\end{proof}

Most structures are ``projective'' (see~\cite{nesetrilluczak}), hence they 
primitively positively define the complete graph on their universe.
Thus the method used to prove that most constraint satisfaction 
problems are NP-complete also shows that for most structures,
compactness implies the ultrafilter axiom. It would be interesting
to see whether the correspondence can be pushed further.
\begin{prob} \label{uf->npc}
Let $\mathbb{A}$ be a structure for which compactness
implies the ultrafilter axiom. Is the corresponding
constraint satisfaction problem necessarily NP-complete?
\end{prob}
The algebraic approach to the dichotomy conjecture
proposes a criterion for determining precisely which
constraint satisfaction problems are polynomial in terms
of polymorphisms of relational structures (see~\cite{siggers}).
To answer the above problem affirmatively, it would be sufficient
to show that when the criterion is not satisfied on a structure
$\bA$, then the compactness of $\bA$ implies the ultrafilter axiom.
However, the converse cannot be proved without proving
that NP is different from P, since some compactness
hypotheses are provably not equivalent to the ultrafilter axiom.  

\section{Axioms of set theory}
In this section we present a few axioms that are weaker 
than the ultrafilter axiom, but not provable from the axioms of Zermelo
and Fraenkel. The results of this section can be found in
``The Axiom of Choice'' by Thomas Jech~\cite{jech} and 
``Zermelo's  Axiom of Choice - Its Origins, Development and Influence''
by Gregory H. Moore~\cite{moore}. Our purpose in considering such axioms
is to obtain independence results without going into forcing theory.
We will use the following axioms.

\smallskip \noindent
{\sc Order extension principle.} Every partial ordering of a set $X$ can be 
extended to a linear ordering of $X$.

\smallskip \noindent
{\sc Ordering principle.} Every set can be linearly ordered.

\smallskip \noindent
{\sc Axiom of choice for finite sets.} For every set $X$ of nonempty finite sets,
there is a function $f: X \rightarrow \cup X$ such that
$f(x) \in x$ for all $x \in X$.

\smallskip
The ultrafilter axiom implies the order extension principle, which implies
the ordering principle, which implies the axiom of choice for finite sets. 
None of the implications is reversible.
For $n \geq 2$, the axiom of choice for $n$-sets is the following statement.

\smallskip \noindent
\choice{n}: For every set $X$ of sets of cardinality $n$, 
there is a function $f: X \rightarrow \cup X$ such that
for each $x \in X$, $f(x)$ is an element of $x$.

\smallskip The axiom of choice for finite sets implies the conjunction of
\choice{n} for all $n$, but the implication is not reversible. There are various
dependencies amongst the axioms of choice for finite sets. For instance, 
\choice{kn} implies \choice{n} for all $k$. Also, \choice{2} is equivalent to \choice{4},
but independent from \choice{3}. Research about such dependencies seems to
have been an active area up to the seventies, as indicated in the 
exercises to chapter 7 of \cite{jech}. It culminated in the following result.
\begin{prop}[Gauntt \cite{gauntt}]\label{comdep}
Let $m\geq 2$ be an integer and $S$ a set of integers. Then \choice{m} follows
from the conjunction of \choice{n}, $n \in S$ if and only if any fixed-point free
subgroup $G$ of the symmetric group $S_n$ contains a fixed-point free subgroup $H$
and a finite sequence $H_1, \ldots, H_k$ of proper subgroups of $H$
such that the sum of indices $[H:H_1] + \cdots + [H: H_k]$ belongs to $S$.
\end{prop}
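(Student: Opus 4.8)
The plan is to prove the two implications separately, treating the equivalence as the assertion that the group-theoretic condition is exactly what is needed. I read the symmetric group in the statement as $S_m$, the full symmetric group on the $m$ elements of a generic member of a family of $m$-element sets, so that fixed-point-free subgroups are precisely those admitting no invariant element. The sufficiency direction (the condition implies the deduction) I would establish as a direct construction in $\mathsf{ZF}$ augmented by the hypothesised choice axioms, while the necessity direction (the deduction implies the condition) I would prove in contrapositive form by building a Fraenkel--Mostowski permutation model.

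For sufficiency, suppose the condition holds and let $X$ be a family of $m$-element sets; I want a choice function. For each $x \in X$ let $\Phi_x$ be the set of bijections from $\{1,\dots,m\}$ to $x$, a free transitive right $S_m$-set (an $S_m$-torsor). I would run a well-founded recursion whose state records, for each $x$ (after a definable partition of $X$), a current subgroup $J \leq S_m$ together with a chosen $J$-coset $\Psi_x \subseteq \Phi_x$; initially $J = S_m$ and $\Psi_x = \Phi_x$. If $J$ fixes some coordinate $j$, then $f(j)$ is constant on $\Psi_x$ and yields a canonical element of $x$, terminating that branch. If $J$ is fixed-point-free, I apply the hypothesis to $G = J$ to obtain a fixed-point-free $H \leq J$ and proper subgroups $H_1,\dots,H_k < H$ with $n = [H:H_1]+\cdots+[H:H_k] \in S$; the disjoint union $\bigsqcup_i H/H_i$ is an $n$-element set on which $H$ acts, and the hypothesis lets me recast the selection problem at $J$ as finitely many selection problems over genuine $n$-element sets formed from the cosets $\Psi_x$, so that \choice{n} selects for each $x$ a coset of some $H_i$, strictly shrinking the controlling subgroup. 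Since subgroup chains in the finite group $S_m$ terminate, the recursion halts with every branch in the fixed-point case. The delicate point is the bookkeeping that keeps the index $n$ landing \emph{exactly} in $S$ at each reduction; this is precisely what the sum-of-indices hypothesis delivers, and making the $n$-sets and the coset selection uniform across $X$ is the part that must be checked with care.

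For necessity I argue the contrapositive. Assuming the condition fails, fix a fixed-point-free $G_0 \leq S_m$ admitting no fixed-point-free $H \leq G_0$ with proper subgroups $H_i < H$ and $\sum_i [H:H_i] \in S$, and build a permutation model in which every \choice{n} with $n \in S$ holds yet \choice{m} fails. I would take atoms $A = \bigsqcup_{p} A_p$ with each $A_p$ an $m$-element block, let the symmetry group be the restricted wreath product of $G_0$ (acting on each block) with the permutations of the index set, and take the normal filter generated by pointwise stabilisers of finite sets of atoms; the model $\mathcal{N}$ consists of the hereditarily symmetric sets. That \choice{m} fails is immediate: a symmetric choice function on $\{A_p\}$ with finite support $E$ would be fixed by the copy of $G_0$ acting on any block disjoint from $E$, forcing a $G_0$-fixed element of that block and contradicting fixed-point-freeness. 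Finally I would transfer the conclusion from this model with atoms to genuine $\mathsf{ZF}$ by the Jech--Sochor embedding theorem, the finite-choice statements being of the boundable form to which it applies.

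The main obstacle, and the real content of the theorem, is verifying that \choice{n} \emph{holds} in $\mathcal{N}$ for each $n \in S$. Given a symmetric family $Y$ of $n$-element sets with support $E$, I would analyse, for each $y \in Y$, the action of the pointwise stabiliser of $E$: its orbits on $y$ are coset spaces $H/H_i$ for a subgroup $H \leq G_0$ recording how the stabiliser acts through the blocks met by $y$, so that $n = |y| = \sum_i [H:H_i]$. The assumed failure of the condition for $G_0$ then forbids the configuration ``$H$ fixed-point-free with all $H_i$ proper''; hence either $H$ fixes a coordinate, yielding a canonical element of $y$, or some orbit is a singleton, again yielding a stabiliser-fixed element. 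Turning this local dichotomy into a genuine symmetric choice function on all of $Y$ is the crux: one must handle the possibility of several distinguished elements by an induction on $n$ that passes to the symmetric subset of stabiliser-fixed points and checks that this subset again falls under the same orbit analysis. Getting this induction to close, so that the arithmetic obstruction $n = \sum_i [H:H_i]$ is matched precisely by the group-theoretic hypothesis, is where I expect essentially all of the difficulty to lie.
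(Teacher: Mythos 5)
There is nothing in the paper to compare your proposal against: the paper does not prove this proposition at all. It is quoted as a known result, with the condition attributed to Mostowski (who proved sufficiency) and the necessity to Gauntt, and the authors explicitly remark that Gauntt's reference is only an announcement whose proof ``has apparently never been published.'' (You were right, incidentally, to read the $S_n$ in the statement as a typo for $S_m$.) So your proposal must stand on its own, and as it stands it is an outline of the historically correct strategy rather than a proof: in the sufficiency direction you flag, but do not carry out, the bookkeeping that makes the coset-shrinking recursion uniform across all $x \in X$ with the index sum landing exactly in $S$; in the necessity direction you flag, but do not carry out, the verification that \choice{n} holds in the permutation model for each $n \in S$. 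These two steps are precisely where the content of the theorem lies, so what you have is a credible plan with the hard parts deferred, not a proof.

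There is also a concrete error in your transfer step. The Jech--Sochor embedding theorem transfers \emph{boundable} statements from a Fraenkel--Mostowski model to a ZF model. The failure of \choice{m} is indeed boundable (it is witnessed by a single family of bounded rank), but the \emph{truth} of \choice{n} is a universal statement quantifying over all families in the universe and is not boundable, so Jech--Sochor alone does not yield a ZF model in which every \choice{n}, $n \in S$, holds while \choice{m} fails. Closing that gap requires the stronger transfer machinery developed by Pincus for injectively boundable statements and their conjunctions, or else Gauntt's own unpublished direct ZF construction; the delicacy of exactly this step is presumably why the paper's authors note that no proof of the necessity direction has ever appeared in print.
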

The condition in Proposition~\ref{comdep} had been formulated by
Mostowski~\cite{mostowski}, who proved its sufficiency.
We note that \cite{gauntt} is only an announcement of the proof
of necessity. Apparently the proof has never been published,
but the result seems to be accepted by the community.
An earlier result states that \choice{m} follows from
the conjunction of \choice{k}, $2 \leq k \leq n$ if and only if
whenever $m$ is expressed as a sum of primes, one of the primes is at most $n$.
In particular, this implies that none of the statements \choice{k} can be proved 
from the axioms of Zermelo and Fraenkel.

One further axiom we need to consider is the following.

\smallskip \noindent
{\sc Kinna-Wagner Principle.} For every set $X$ of sets of cardinality at least
$2$, there is a function $f: X \rightarrow \mathcal{P}(\cup X)$ such that
for all $x \in X$, $f(x)$ is a nonempty proper subset of $x$.

\smallskip The Kinna-Wagner Principle implies the ordering principle,
but is independent from the ultrafilter axiom. We could not find
references to cardinality-specific versions of the Kinna-Wagner principle,
but these will be useful to us.

\smallskip \noindent
\kw{n}: For every set $X$ of sets of cardinality $n$, 
there is a function $f: X \rightarrow \mathcal{P}(\cup X)$ such that
for all $x \in X$, $f(x)$ is a nonempty proper subset of $x$.

Clearly, there is no loss of generality in assuming that $|f(x)| \leq |x|/2$
for all $x \in X$. Thus, \kw{2} is equivalent to \choice{2} and 
\kw{3} is equivalent to \choice{3}. In general, \kw{n} can be weaker than
\choice{n}, but since \choice{n} follows from \kw{n} and the conjunction
of all \choice{k}, $k \leq n/2$, \kw{n} is still independent from
the axioms of Zermelo and Fraenkel.

\section{Intermediate compactness results}
For $n \geq 2$, the directed $n$-cycle 
$\bC_n$ is the structure with universe $\mathbb{Z}_n = \{0,1,\ldots,n-1\}$
and one binary relation $R(\bC_n) = \{(i,i+1) \ | \ i \in \mathbb{Z}_n \}$. 
It has long been known that the constraint satisfaction problem associated 
with $\bC_n$ is polynomial for each $n$. In this section,
we show that the hypothesis that $\bC_n$ is compact is weaker
than the ultrafilter axiom, but not provable from the axioms
of Zermelo and Fraenkel.
\begin{prop}
For $n \geq 2$, \choice{n} implies that $\bC_n$ is compact.
\end{prop}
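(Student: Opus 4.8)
The plan is to invoke the filter-tolerant characterisation of Proposition~\ref{com=ft}: under \choice{n} it suffices to show that for every set $I$ and every filter $\mathcal{F}$ on $I$, the structure $\left(\bC_n\right)^I_{\mathcal{F}}$ admits a homomorphism to $\bC_n$. The guiding observation is that the edge relation of $\left(\bC_n\right)^I_{\mathcal{F}}$ forces adjacent functions to differ by a constant ``phase'' modulo $\mathcal{F}$, so the problem ought to split into independent choices of a phase on each connected component, and each such choice ranges over an $n$-element set --- exactly the situation \choice{n} is designed to resolve.

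First I would introduce a relation on the universe $(\mathbb{Z}_n)^I$ by declaring $f \approx g$ when there is a constant $c \in \mathbb{Z}_n$ with $\{i \in I \mid g(i) = f(i) + c\} \in \mathcal{F}$. Routine checks show $\approx$ is an equivalence relation, and the constant $c$ is unique for a given pair: two competing constants $c, c'$ would yield a set in $\mathcal{F}$ on which $f(i)+c = f(i)+c'$, forcing $c=c'$ since members of a filter are nonempty. Writing $d(f,g)$ for this unique constant, one has the cocycle identities $d(f,f)=0$ and $d(f,g)+d(g,h)=d(f,h)$ within each class. The structural key is that if $(f,g) \in R\!\left(\left(\bC_n\right)^I_{\mathcal{F}}\right)$ then $\{i \mid g(i)=f(i)+1\}\in\mathcal{F}$, so $f\approx g$ with $d(f,g)=1$; hence every edge lies inside a single $\approx$-class, and the classes are precisely the connected components.

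Next, for each $\approx$-class $C$ I would form the set $\Phi_C$ of all maps $\psi\colon C\to\mathbb{Z}_n$ satisfying $\psi(g)=\psi(f)+d(f,g)$ for all $f,g\in C$. Each such $\psi$ is a homomorphism on $C$, since $d(f,g)=1$ along every edge forces $\psi(g)=\psi(f)+1$. Arguing for one fixed class at a time (a single choice of base point $f_0\in C$ is harmless in ZF), the map $c\mapsto\bigl(g\mapsto c+d(f_0,g)\bigr)$ is a bijection $\mathbb{Z}_n\to\Phi_C$, so $\Phi_C$ has exactly $n$ elements; concretely $\Phi_C$ is a torsor under the shift action of $\mathbb{Z}_n$. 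Thus the family $\{\Phi_C\}$, indexed by the $\approx$-classes, is a set of $n$-element sets, and \choice{n} furnishes a function selecting one $\psi_C\in\Phi_C$ for every class at once. Gluing the $\psi_C$ into a single $\phi\colon\left(\bC_n\right)^I_{\mathcal{F}}\to\bC_n$ gives a homomorphism, because any edge $(f,g)$ keeps $f,g$ in a common class $C$, whence $\phi(g)=\psi_C(g)=\psi_C(f)+1=\phi(f)+1$.

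I expect the main obstacle to be the careful bookkeeping of choice: one must verify that each $\Phi_C$ is genuinely an $n$-element set \emph{within ZF}, deferring all simultaneous selection to the single application of \choice{n}. In particular the relation $d$ and the family $\{\Phi_C\}$ must be outright definable from $\mathcal{F}$ with no hidden appeals to choice, and one should record that the $\Phi_C$ are nonempty and pairwise disjoint (their members have distinct domains), so that the family really is a set of cardinality-$n$ sets to which \choice{n} applies. The cocycle computations and the torsor bijection are then routine.
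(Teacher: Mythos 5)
Your proof is correct and takes essentially the same route as the paper: both arguments decompose the filter-tolerant power into its connected pieces, observe that each piece admits exactly $n$ homomorphisms to $\bC_n$ (your $\Phi_C$ being a torsor under $\mathbb{Z}_n$), and then apply \choice{n} once to select one homomorphism per piece and glue. The only cosmetic difference is that the paper first passes to the quotient $\left(\bC_n\right)^I_{\mathcal{F}}/\!\sim$ of Proposition~\ref{com=ftq}, where the components become literal copies of $\bC_n$, whereas you keep the phase bookkeeping in the unquotiented power via the cocycle $d$.
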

\begin{proof} Let ${\mathcal F}$ be a filter on a set $I$.
First note that for the equivalence relation
$\sim$ of Proposition~\ref{com=ftq}, 
$(\bC_n)^I_{\mathcal F}/\!\sim$
is a disjoint union of copies of $\bC_n$. 
Indeed, we have $f \rightarrow g$ if and only
if $\{ i \in I | g(i) = f(i) + 1 \} \in {\mathcal F}$
(where addition is modulo $n$). 

Let $\mathcal{C}$ be the set of connected components of
$(\bC_n)^I_{\mathcal F}/\!\sim$. For each $C \in \mathcal{C}$,
the set of homomorphisms from $C$ to $\bC_n$ has cardinality 
exactly $n$. If we assume \choice{n}, we can select a homomorphism
$\phi_C: C \rightarrow \bC_n$ for each $C \in \mathcal{C}$.
Then, $\phi = \bigcup_{C  \in \mathcal{C}} \phi_C$ is a homomorphism
of $(\bC_n)^I_{\mathcal F}/\!\sim$ to $\bC_n$. This implies that
$(\bC_n)^I_{\mathcal F}$ admits a homomorphism to $\bC_n$,
hence $\bC_n$ is compact by Proposition~\ref{com=ftq}.
\end{proof}

Thus, the hypothesis that $\bC_n$ is compact is weaker than the
ultrafilter axiom. The next two results show that it cannot be
proved from the axioms of Zermelo and Fraenkel.

\begin{prop}
Let $p$ be a prime integer. If $\bC_p$ is compact, then
\kw{p} holds. 
\end{prop}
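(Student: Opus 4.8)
The plan is to use the original definition of compactness rather than the tolerant-power reformulation. Given a set $X$ of $p$-element sets, I would build a single structure $\bB$ (of the same type as $\bC_p$, i.e.\ with one binary relation) out of $X$ in a canonical, choice-free way, arrange that every finite substructure of $\bB$ maps to $\bC_p$, and then feed the resulting homomorphism $h\colon \bB \to \bC_p$ into a counting argument to produce the Kinna--Wagner selection. Concretely, for $x \in X$ let $\mathrm{Cyc}(x)$ be the set of $p$-cycles $s$ on $x$ (there are $(p-1)!$ of them), each of which turns $x$ into a copy of $\bC_p$ via the successor map $y \mapsto s(y)$. Let $\bB$ have vertex set $\{(x,s,y) \mid x \in X,\ s \in \mathrm{Cyc}(x),\ y \in x\}$ and edges $(x,s,y) \to (x,s,s(y))$, so that $\bB$ is a disjoint union of copies $C_{x,s}$ of $\bC_p$, one for each pair $(x,s)$. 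Indexing the vertices by the triple keeps the components separate even when the sets in $X$ overlap, so no disjointness hypothesis is needed.

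Next I would verify the hypothesis of compactness for $\bB$. Each component $C_{x,s}$ is isomorphic to $\bC_p$, hence admits a homomorphism to $\bC_p$; a finite substructure of $\bB$ meets only finitely many components, so mapping each of those (a finite, choice-free task) gives a homomorphism of that substructure to $\bC_p$. Compactness of $\bC_p$ then yields a single homomorphism $h\colon \bB \to \bC_p$. Since any homomorphism from a copy of $\bC_p$ to $\bC_p$ is a rotation, hence a bijection, the restriction $h|_{C_{x,s}}$ is a bijection of $x$ onto $\mathbb{Z}_p$ respecting $s$; in particular there is a unique $b_x(s) \in x$ with $h(x,s,b_x(s)) = 0$. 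All of this is read off from the single map $h$, so $(x,s) \mapsto b_x(s)$ is one function, defined without any appeal to choice.

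Finally comes the crux. For $x \in X$ and $y \in x$ set $N_x(y) = |\{s \in \mathrm{Cyc}(x) \mid b_x(s) = y\}|$, so that $\sum_{y \in x} N_x(y) = |\mathrm{Cyc}(x)| = (p-1)!$. I would define $f(x) = \{y \in x \mid N_x(y) = \max_{y' \in x} N_x(y')\}$, which is manifestly nonempty. The place where primality is essential is that the values $N_x(y)$ cannot all be equal: were they all equal to some $c$, we would get $pc = (p-1)!$, i.e.\ $p$ would divide $(p-1)!$, which is false for $p$ prime. Hence the minimum of the $N_x(y)$ is strictly below the maximum, some $y$ is excluded from $f(x)$, and $f(x)$ is a nonempty proper subset of $x$. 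The assignment $x \mapsto f(x)$ is a single function $f\colon X \to \mathcal{P}(\bigcup X)$ obtained from $h$ by counting, so it witnesses \kw{p}. The main obstacle is making sure the whole construction is genuinely choice-free and that proper-ness is guaranteed; both hinge on the arithmetic fact that $p$ does not divide $(p-1)!$, which is exactly why the statement is about primes and why one obtains \kw{p} rather than the full strength of \choice{p}.
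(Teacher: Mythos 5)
Your proof is correct, and it reaches the paper's combinatorial core by a genuinely more elementary route. The paper never applies the finite-substructure definition of compactness directly: it goes through the filter-tolerant power characterisation (Proposition~\ref{com=ft}), taking $I$ to be the set of partial choice functions on $X$, letting $\mathcal{F}$ be the filter generated by the sets $x^+$ of partial choice functions defined at $x$, and extracting from a homomorphism $\phi\colon (\bC_p)^I_{\mathcal{F}} \to \bC_p$ a distinguished element of $x$ for each class $\{\psi, \psi+1, \ldots, \psi+p-1\}$ of bijections $\psi\colon x \to \mathbb{Z}_p$. Those classes are exactly your cyclic orderings $s \in \mathrm{Cyc}(x)$, the functions $f_\psi, \ldots, f_{\psi+p-1}$ play the role of your component $C_{x,s}$, and from that point on the two arguments coincide: the distinguished element is the preimage of $0$, the set of most-frequently distinguished elements is your $f(x)$, and properness rests on the fact that $p$ does not divide $(p-1)!$ in both. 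What you do differently is to package the cycles as an honest disjoint union $\bB$, with the triple indexing correctly guaranteeing disjointness without any hypothesis on $X$, and to invoke compactness in its original form after checking in ZF --- by finite choice, which is a theorem of ZF --- that every finite substructure of $\bB$ admits a homomorphism to $\bC_p$. Your route buys self-containedness: it needs neither Proposition~\ref{com=ft} nor any filter, and it makes transparent that the instances forcing \kw{p} are nothing more than disjoint unions of $p$-cycles, one per cyclic ordering of each $x$. The paper's route buys uniformity with the rest of its development, where every compactness statement is channelled through tolerant powers or their quotients (Propositions~\ref{com=ft} and~\ref{com=ftq}), so the filter of partial choice functions is its standing device for converting a choice problem into a single power structure. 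Both proofs use primality in the same two places: homomorphisms between directed $p$-cycles are rotations, and $p \nmid (p-1)!$.
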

\begin{proof} 
Let $X$ be a set of sets of cardinality $p$. Let
$I$ be the set of partial choice functions on $X$, that is,
$$
I = \{ c: Y \rightarrow \cup X \ | \ Y \subseteq X \mbox{ and }
c(x) \in x \mbox{ for all $x \in Y$}\}.
$$
For $x \in X$, put
$$
x^+ = \{ c \in I \ | \ \mbox{$x$  is in the domain of $c$}\}.
$$
Then the family $\{ x^+ \ | \ x \in X\}$ is closed under finite intersection,
hence it generates a filter $\mathcal{F}$ on $I$. By hypothesis,
there exists a homomorphism $\phi$ of 
$\left (\bC_p \right )^I_{\mathcal F}$ to $\bC_p$.
We will use $\phi$ to construct a function $f: X \rightarrow \mathcal{P}(\cup (X))$
such that for all $x \in X$, $f(x)$ is a nonempty proper subset of $x$.

For $x \in X$ and $j \in x$, put
$$
x^+_j = \{ c \in I \ | \ \mbox{$x$  is in the domain of $c$ and $c(x) = j$}\}.
$$
Then $I$ can be partitioned into the sets $x^+_j$, $j \in x$, and $x^-$,
where $x^-$ is the set of elements of $I$ that do not have $x$ in their domain.
Since $x$ is a set of cardinality $p$, there exist $p!$ bijections of
$x$ to $\{0, \ldots, p-1\}$. For a bijection $\psi: x \rightarrow \{0, \ldots, p-1\}$,
we let $f_{\psi}$ be the element of the universe of $(\bC_p)^I_{\mathcal F}$
defined by $f_{\psi}(i) = \psi(j)$ if $i \in x^+_j$, and $f_{\psi}(i) = 0$
if $i \in x^-$. Then $\{f_{\psi}, f_{\psi + 1}, \ldots, f_{\psi + p - 1} \}$
is a copy of $\bC_p$ in $(\bC_p)^I_{\mathcal F}$, where
$\psi + k: x \rightarrow \{0, \ldots, p-1\}$ is defined by $\psi + k(j) = \psi(j) + k$. 
Therefore $\phi$ maps 
$\{f_{\psi}, f_{\psi + 1}, \ldots, f_{\psi + p - 1} \}$ bijectively to the
universe of $\bC_p$. Thus there exists exactly one $k \in \{0, \ldots, p-1\}$
such that $\phi(f_{\psi + k}) = 0$. We call $(\psi +k)^{-1}(0) \in x$ 
the {\em element of $x$ distinguished by  
$\{\psi, \psi + 1, \ldots, \psi + p - 1 \}$}.

The $p!$ bijections of $x$ to $\{0, \ldots, p-1\}$ are partitionned
into $(p-1)!$ classes of the form $\{\psi, \psi + 1, \ldots, \psi + p - 1 \}$.
The number of times an element of $x$ appears as distinguished element
is not constant, since $p$ does not divide $(p-1)!$. Therefore the set
$y_x$ of elements of $x$ that appear the most times as distinguished
element is a nonempty proper subset of $x$. Therefore the function 
$f: X \rightarrow \mathcal{P}(\cup X)$ defined by $f(x) = y_x$
satisfies the required properties.
\end{proof}
The fact that \choice{2} = \kw{2} is equivalent to the compactness of
$\bC_2$ was proved by Mycielski~\cite{mycielski}. (Note that
$\bC_2 = \bK_2$.) The above results
show that \choice{3} = \kw{3} is equivalent to the compactness of
$\bC_3$. It is not hard to show that for prime $p$, \kw{p} is equivalent to
the compactness of $\bC_p$, and that for some composite numbers $n$,
\choice{n}, \kw{n} and the compactness of $\bC_n$ are inequivalent
hypotheses. However, for our purposes, it is sufficient to show
that the compactness of $\bC_n$ can never be proved from the axioms
of Zermelo and Fraenkel. Our next result completes the proof.

\begin{prop}
$\bC_n$ is compact if and only if $\bC_p$ is compact for every prime factor
$p$ of $n$. 
\end{prop}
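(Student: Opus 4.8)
The plan is to reduce the compactness of $\bC_n$ to that of the directed cycles of prime‑power length, and then to the prime cycles, exploiting the multiplicative structure of $\mathbb{Z}_n$. Writing $n=\prod_{i=1}^r p_i^{a_i}$, the Chinese Remainder Theorem gives an isomorphism of relational structures $\bC_n\cong\prod_{i=1}^r\bC_{p_i^{a_i}}$, where $\prod$ is the categorical (tensor) product of digraphs: under $\mathbb{Z}_n\cong\prod_i\mathbb{Z}_{p_i^{a_i}}$ the edge $(x,x+1)$ corresponds to adding $1$ simultaneously in every coordinate. It therefore suffices to prove two reductions: the ``downward'' one, that $\bC_n$ compact implies $\bC_p$ compact for every prime $p\mid n$, and the ``upward'' one, that $\bC_p$ compact for all $p\mid n$ implies $\bC_n$ compact.

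For the downward direction, fix a prime $p\mid n$, set $m=n/p$, and show directly that compactness of $\bC_n$ yields \kw{p}; together with the equivalence between \kw{p} and the compactness of $\bC_p$ recorded after the preceding proposition, this gives that $\bC_p$ is compact. I would reuse the proof of that proposition, supplying one new ingredient to cope with the fact that $\bC_p$ does not embed into $\bC_n$. Given a set $X$ of $p$-element sets, form the filter $\cF$ on the set $I$ of partial choice functions exactly as before and let $\phi\colon(\bC_n)^I_{\cF}\to\bC_n$ be a homomorphism from compactness. For a bijection $\psi\colon x\to\mathbb{Z}_p$ and $k\in\mathbb{Z}_n$, put $F^{\psi}_k(c)=m\,\psi(c(x))+k$ when $x\in\dom c$ and $F^{\psi}_k(c)=k$ otherwise. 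Since $j\mapsto mj$ is an injective group homomorphism $\mathbb{Z}_p\hookrightarrow\mathbb{Z}_n$ onto the unique subgroup of order $p$, each $F^{\psi}_k\to F^{\psi}_{k+1}$ is a genuine edge and $\{F^{\psi}_k\}_{k\in\mathbb{Z}_n}$ is a copy of $\bC_n$, on which $\phi$ acts as a rotation $k\mapsto k+\alpha(\psi)$; one checks $F^{\psi+t}_{k}=F^{\psi}_{k+mt}$, whence $\alpha(\psi+t)=\alpha(\psi)+mt$. Choosing in each of the $(p-1)!$ cosets $\{\psi+t:t\in\mathbb{Z}_p\}$ the representative for which $\alpha$ is least distinguishes one element of $x$; as $p\nmid(p-1)!$ these cannot be equidistributed, so the most frequently distinguished elements form a proper nonempty subset of $x$, witnessing \kw{p}.

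For the upward direction I would use two facts about the categorical product. First, the tolerant power commutes with products, $(\bA\times\bB)^I_{\cF}=\bA^I_{\cF}\times\bB^I_{\cF}$, because a coordinate set lies in $\cF$ for the product exactly when it does for each factor. Second, a homomorphism into a product is a pair of homomorphisms, so Proposition~\ref{com=ft} shows immediately that a product of finitely many compact structures is compact: homomorphisms $\bA^I_{\cF}\to\bA$ and $\bB^I_{\cF}\to\bB$ combine through the projections into a homomorphism $\bA^I_{\cF}\times\bB^I_{\cF}\to\bA\times\bB$. By the Chinese Remainder decomposition it then remains only to show that $\bC_p$ compact implies $\bC_{p^a}$ compact for each $a$, which I would attempt by induction on $a$ using Proposition~\ref{com=ftq}: $(\bC_{p^a})^I_{\cF}/\!\sim$ is a disjoint union of copies of $\bC_{p^a}$, and a homomorphism to $\bC_{p^a}$ amounts to choosing, for each component $C$, a point of the $\mathbb{Z}_{p^a}$-torsor $\mathrm{Hom}(C,\bC_{p^a})$. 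The reduction $\bC_{p^a}\to\bC_p$ carries these components onto those of $(\bC_p)^I_{\cF}/\!\sim$, so a homomorphism supplied by the compactness of $\bC_p$ fixes the residue modulo $p$ of the rotation on each $C$, cutting its torsor down to a coset of $p\mathbb{Z}_{p^a}$, i.e.\ a $\mathbb{Z}_{p^{a-1}}$-torsor; the inductive hypothesis, compactness of $\bC_{p^{a-1}}$, is meant to select a point in each of these.

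The main obstacle is exactly this last, ``primary'' step. The coprime combination is clean — indeed averaging is available, since for $\gcd(a,b)=1$ any homomorphism $g\colon\mathbb{X}\times\bC_b\to\bC_a$ descends to $x\mapsto b^{-1}\sum_{s\in\mathbb{Z}_b}g(x,s)\bmod a$, giving directly that $\bC_{ab}$ compact implies $\bC_a$ compact — but the passage from $\bC_p$ to $\bC_{p^a}$ cannot be obtained this way, because $\bC_{p^a}$ is not a product of smaller cycles and $\bC_p$ does not lift into $\bC_{p^a}$. Concretely, to invoke the inductive hypothesis I must exhibit the residual family of $\mathbb{Z}_{p^{a-1}}$-torsors as the family of components of a tolerant power $(\bC_{p^{a-1}})^{I'}_{\cF'}$ over a suitable derived index set $I'$ and filter $\cF'$; verifying that the narrowing forced by the chosen residues modulo $p$ is genuinely realized by such a derived filter is the delicate point, and is where I expect the real work to lie.
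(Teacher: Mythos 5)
Two of your three pieces are sound, but in both directions you deviate from what the paper actually needs, and in one place the proof is genuinely incomplete. On the coprime combination you do exactly what the paper does (CRT decomposition, tolerant powers commuting with finite products), and that part is fine. Your downward direction is a correct adaptation of the earlier \kw{p} argument, but it is a detour resting on a borrowed, unproven step: the paper proves ``compact $\bC_{kp}$ implies compact $\bC_p$'' directly, by sending $f \in (\bC_p)^I_{\cF}$ to $kf$ defined by $kf(i)=k\cdot f(i)$ and letting $\psi(f)$ be the unique $j$ with $\phi(kf) \in \{kj, kj+1, \ldots, kj+k-1\}$; an edge $(f,g)$ gives a directed path of length $k$ from $kf$ to $kg$, forcing $\psi(g)=\psi(f)+1$. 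Your route instead needs the implication ``\kw{p} implies $\bC_p$ compact'', which the paper only asserts as ``not hard to show'' and never proves, so you would still owe that argument (it does hold: a nonempty subset $S$ of a $\mathbb{Z}_p$-torsor with $|S|$ prime to $p$ has a canonical barycentre, so \kw{p} selects a homomorphism on each component of $(\bC_p)^I_{\cF}/\!\sim$). This half is completable, but not complete as written.

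The genuine gap is the prime-power induction, and you have pinpointed it yourself: you must choose, for each component of the quotient power, one point of a residual torsor, and you believe this forces you to realize the family of residual torsors as the components of a new tolerant power $(\bC_{p^{a-1}})^{I'}_{\cF'}$. That requirement is a red herring, and it is exactly what blocks you. Compactness, as defined in the paper, quantifies over \emph{all} structures of the given type, not just tolerant powers; Proposition~\ref{com=ft} is an equivalence, and at this step one should use the definition rather than the power characterization. The paper's resolution: with the homomorphism $\psi$ from $\left(\bC_{p^{k+1}}\right)^I_{\cF}/\!\sim$ to $\bC_{p^k}$ in hand, form the auxiliary digraph $\bA$ whose universe is $\psi^{-1}(0)$ and whose edges are the pairs $(f,g)$ joined by a directed path of length $p^k$ in the quotient power, and the digraph $\bB$ on $\phi^{-1}(0) \subseteq \mathbb{Z}_{p^{k+1}}$ with the analogous relation. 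Then $\bB$ is a single copy of $\bC_p$, and $\bA$ is a disjoint union of copies of $\bC_p$, one in each component of the quotient power; hence every finite substructure of $\bA$ admits a homomorphism to $\bB$, and compactness of $\bC_p$ applied to $\bA$ \emph{itself} yields one homomorphism $\chi^*:\bA\rightarrow\bB$, which coherently makes all the componentwise choices at once and determines the desired lift into $\bC_{p^{k+1}}$. The same device would close your variant of the induction (quotient mod $p$ instead of mod $p^k$, put the path-of-length-$p$ relation on $\psi^{-1}(0)$, and apply the inductive compactness of $\bC_{p^{a-1}}$ to that auxiliary structure). Without this idea--or some substitute for it--your induction does not close, so the crucial step of the proof is missing.
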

\begin{proof} We first show that if $\bC_{kp}$ is compact, then
$\bC_{p}$ is compact.  Suppose that 
$\phi: \left (\bC_{kp} \right )^I_{\mathcal{F}} \rightarrow \bC_{kp}$
is a homomorphism. For $f$ in the universe of 
$\left (\bC_{p} \right )^I_{\mathcal F}$, we let $kf$ be the 
element of the universe of $\left (\bC_{kp} \right )^I_{\mathcal F}$
defined by $kf(i) = k\cdot f(i)$. We define 
$\psi: \left (\bC_{p} \right )^I_{\mathcal{F}} \rightarrow \bC_{p}$
by letting $\psi(f)$ be the unique $j \in \mathbb{Z}_p$
such that $\phi(kf) \in \{ kj, kj+1, \ldots, kj + k - 1\}$.
If $(f,g) \in R(\left(\bC_p\right )^I_{\mathcal F})$, then
there is a directed path of length $k$ from $kf$ to $kg$
in $\left (\bC_{kp} \right )^I_{\mathcal F}$. Therefore, 
$\psi(g) = \psi(f) + 1$. This shows that $\psi$ is a homomorphism.
Therefore if $\bC_{kp}$ is compact, then $\bC_{p}$ is compact.

We next show that if $p$ and $q$ are relatively prime,
and $\bC_p$, $\bC_q$ are compact, then $\bC_{pq}$ is compact. 
This easily follows from the fact that $\bC_{pq}$ is naturally 
isomorphic to the product $\bC_p \times \bC_q$ with universe
$\mathbb{Z}_p \times \mathbb{Z}_q$ and relation 
$R(\bC_p \times \bC_q) =
\{((i,j),(i+1,j+1)) \ | \ (i,j) \in \mathbb{Z}_p \times \mathbb{Z}_q \}$.
Thus, a homomorphism $\phi$ to $\bC_{pq}$ naturally corresponds
to a pair $\phi_p$, $\phi_q$ of homomorphisms to $\bC_p$
and $\bC_q$ respectively. The definition of compactness then
directly implies that if $\bC_p$ and $\bC_q$ are compact, then 
$\bC_{pq}$ is compact.

It only remains to show that if $p$ is prime and $\bC_{p}$ is compact,
then $\bC_{p^k}$ is compact for all $k \geq 1$. We proceed
by induction on $k$. Suppose that $\bC_{p}$ and $\bC_{p^k}$
are compact. There is a homomorphism $\phi$ of $\bC_{p^{k+1}}$ to
$\bC_{p^k}$ corresponding to the natural quotient of
$\mathbb{Z}_{p^{k+1}}$ to $\mathbb{Z}_{p^{k}}$.
Applying $\phi$ coordinatewise yields a homomorphism
$\phi': \left (\bC_{p^{k+1}} \right )^I_{\mathcal F}
\rightarrow \left (\bC_{p^{k}} \right )^I_{\mathcal F}$. If $\bC_{p^k}$
is compact, there exists a homomorphism
$\phi'': \left (\bC_{p^{k}} \right )^I_{\mathcal F} \rightarrow \bC_{p^k}$.
Thus $\phi'' \circ \phi': \left (\bC_{p^{k+1}} \right )^I_{\mathcal F}
\rightarrow \bC_{p^k}$ is a homomorphism. The natural quotient by the
equivalence $\sim$ of Proposition~\ref{com=ftq} yields
a homomorphism $\psi:  \left (\bC_{p^{k+1}} \right )^I_{\mathcal F}/\!\sim
\; \rightarrow \bC_{p^k}$. The connected components of
$\left (\bC_{p^{k+1}} \right )^I_{\mathcal F}/\!\sim$
are copies of $\bC_{p^{k+1}}$. Each of these connected
components admits exactly $p$ homomorphisms to $\bC_{p^{k+1}}$
with the property that their composition with $\phi$ coincides
with $\psi$. We need to choose one of these homomorphisms
for each connected component.

We cannot prove that the compactness of $\bC_{p}$ implies
\choice{p} for $p$ larger than $3$, but in the present
context, the full force of \choice{p} is not required.
Let $\bA$ be the structure with universe $\psi^{-1}(0)$,
and one binary relation consisting of the pairs $(f,g)$
such that there is a directed path of length $p^k$ from
$f$ to $g$ in $\left (\bC_{p^{k+1}} \right )^I_{\mathcal F}/\!\sim$.
The connected components of $\bA$ are copies of $\bC_{p}$,
one in each connected component of 
$\left (\bC_{p^{k+1}} \right )^I_{\mathcal F}/\!\sim$.
Similarly, let $\bB$ be the structure with universe $\phi^{-1}(0)$,
and one binary relation consisting of the pairs $(i,j)$
such that there is a directed path of length $p^k$ from
$i$ to $j$ in $\bC_{p^{k+1}}$. Then $\bB$ is a copy of $\bC_{p}$.
Since $\bC_{p}$ is compact, there exists a homomorphism
$\chi^*: \bA \rightarrow \bB$. We can then define
$\chi:  \left (\bC_{p^{k+1}} \right )^I_{\mathcal F}/\!\sim \;
\rightarrow \bC_{p^{k+1}}$
componentwise as the unique homomorphism with the property
that $\chi(f) = \chi^*(f)$ for all $f \in \psi^{-1}(0)$.
\end{proof}

The results above imply that the conjunction of \choice{n} for all $n$
is equivalent to the compactness of $\bC_n$ for all $n$. There are
other structures that can be proved to be compact from
the conjunction of \choice{n} for all $n$. However it seems 
unlikely that this is the case for all structures that are 
not compactness complete. We conclude this section with one
possible witness to this assertion. 
\begin{prop} \label{oep->c}
Let $\bA$ be the structure on the universe $\{0, 1\}$ with
the two binary relations $\neq$ and $\leq$. The order extension
principle implies that $\bA$ is compact.
\end{prop}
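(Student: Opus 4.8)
The plan is to verify compactness through Proposition~\ref{com=ftq}: it suffices to produce, for every set $I$ and every filter $\mathcal{F}$ on $I$, a homomorphism from the quotient $Q = \bA^I_{\mathcal{F}}/\!\sim$ to $\bA$. I would identify the universe of $\bA^I_{\mathcal{F}}$ with $\mathcal{P}(I)$ by sending $f$ to $X_f = f^{-1}(1)$. A direct computation then shows that $f \sim g$ means that $X_f \triangle X_g$ is $\mathcal{F}$-small (its complement lies in $\mathcal{F}$), that the relation $\leq$ descends to a genuine (antisymmetric) partial order $\le^*$ on $Q$ given by ``$X_f \subseteq X_g$ modulo $\mathcal{F}$'', that is $\overline{X_f \setminus X_g} \in \mathcal{F}$, with least element $[\emptyset]$ and greatest element $[I]$, and that $\neq$ descends to a symmetric relation $\neq^*$ (the relations being $\sim$-invariant, as noted after Proposition~\ref{com=ft}).

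The key structural observation I would establish next is that complementation $c \colon [X] \mapsto [\overline{X}]$ is a well-defined, fixed-point-free, order-reversing involution of $(Q, \le^*)$, and that $\neq^*$ is exactly the perfect matching pairing each class with its $c$-image. Indeed $X_f \triangle X_g \in \mathcal{F}$ is equivalent to $[X_g] = [\overline{X_f}]$, because $X_g \triangle \overline{X_f} = \overline{X_f \triangle X_g}$; fixed-point-freeness follows since $[X]=[\overline X]$ would force $\emptyset \in \mathcal{F}$; and order-reversal follows from $\overline{X_g}\setminus \overline{X_f} = X_f \setminus X_g$. Thus a homomorphism $Q \to \bA$ amounts to an up-set $U$ of $\le^*$ that contains exactly one of $[X]$ and $c([X])$ for every $X$.

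To build such a $U$, I would invoke the order extension principle to extend $\le^*$ to a linear order $\sqsubseteq$ on $Q$, and set $U = \{ q \in Q : c(q) \sqsubset q \}$, the larger element of each $c$-pair. Since $c$ is fixed-point-free and $\sqsubseteq$ is linear, $U$ meets each pair $\{q, c(q)\}$ in exactly one point, so $\mathbbm{1}_U$ respects $\neq^*$. That $U$ is an up-set --- the step I expect to be the crux --- follows from the order-reversing property: if $q \in U$ and $q \le^* r$, then $c(r) \le^* c(q)$, and since $\sqsubseteq$ extends $\le^*$ we obtain the chain $c(r) \sqsubseteq c(q) \sqsubset q \sqsubseteq r$, whence $c(r) \sqsubset r$ and $r \in U$. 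Therefore $\mathbbm{1}_U$ also respects $\le^*$, giving a homomorphism $Q \to \bA$, and Proposition~\ref{com=ftq} yields that $\bA$ is compact.
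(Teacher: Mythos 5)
Your proof is correct and is essentially the paper's own argument: both verify compactness via Proposition~\ref{com=ftq}, identify elements of the power with subsets of $I$, apply the order extension principle to the quotient order, and map a class to $1$ exactly when it lies strictly above its complementary class in the linear extension, so that your up-set verification $c(r) \preceq c(q) \prec q \preceq r$ is precisely the paper's chain $\mathbbm{1}_X/\!\sim \; \leq \; \mathbbm{1}_Y/\!\sim \; \preceq \; \mathbbm{1}_{\overline{Y}}/\!\sim \; \leq \; \mathbbm{1}_{\overline{X}}/\!\sim$ read contrapositively. The differences are purely presentational: you package complementation as a fixed-point-free, order-reversing involution and check antisymmetry of the quotient order explicitly (a point the paper leaves implicit before invoking order extension).
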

\begin{proof}
Let $\mathcal{F}$ be a filter on a set $I$. We will show that
the order extension principle implies that $\bA^I_{\mathcal F}/\!\sim$
admits a homomorphism to $\bA$, where $\sim$ is the equivalence
of Proposition~\ref{com=ftq}. 
The elements of $\bA^I_{\mathcal F}$
will be represented by their characteristic function. In 
$\bA^I_{\mathcal F}/\!\sim$, the only element adjacent (under $\neq$) to
$\mathbbm{1}_X/\!\sim$ is $\mathbbm{1}_{\overline{X}}/\!\sim$.
The relation $\leq$ of $\bA^I_{\mathcal F}$ is characterised
by 
$${ \mathbbm{1}_X/\!\sim \; \leq \mathbbm{1}_Y/\!\sim } \Leftrightarrow
{\{ i \in I \ | \ \mathbbm{1}_X(i) \leq \mathbbm{1}_Y(i) \} \in \mathcal{F}.}$$
The order extension principle implies that it extends to a linear order
$\preceq$ on $\bA^I_{\mathcal F}/\!\sim$. We define 
$\phi: \bA^I_{\mathcal F}/\!\sim \; \rightarrow \bA$ by
$$
\phi(\mathbbm{1}_X/\!\sim) = \left \{
\begin{array}{l}
\mbox{$0$ if $\mathbbm{1}_X/\!\sim \; \preceq \mathbbm{1}_{\overline{X}}/\!\sim$,} \\
\mbox{$1$ otherwise.}
\end{array}
\right.
$$
For all $\mathbbm{1}_X/\!\sim$, 
we have $\{\phi(\mathbbm{1}_X/\!\sim), \phi(\mathbbm{1}_{\overline{X}}/\!\sim)\}
= \{ 0, 1\}.$ Therefore $\phi$ preserves the adjacency relation $\neq$.
For $\mathbbm{1}_X/\!\sim \; \leq \mathbbm{1}_Y/\!\sim$, if $\phi(\mathbbm{1}_Y/\!\sim) = 0$,
then $\mathbbm{1}_X/\!\sim \; \leq \mathbbm{1}_Y/\!\sim \; \preceq \mathbbm{1}_{\overline{Y}}
\; \leq \mathbbm{1}_{\overline{X}}$, hence $\phi(\mathbbm{1}_X/\!\sim) = 0$.
Therefore $\phi$ preserves $\leq$. This shows that $\phi$ is a homomorphism. 
\end{proof}
We have not been able to prove or disprove that the compactness
of the structure $\bA$ in Proposition~\ref{oep->c} follows from the
conjunction of \choice{n} for all $n$. On the other hand,
the hypothesis that $\bA$ is compact seems to be fairly close
to the order extension principle: It implies that
every order extends to a relation $R$ that is trichotomic
and has the property that if $(x,y)$, $(y,z)$, $(z,x)$ belong
to $R$, then $\{x,y,z\}$ is an antichain in the order.
Again we cannot prove or disprove that the latter property is
weaker than the order extension principle. Perhaps the trick
of comparing to previously investigated axioms has its limits,
and forcing theory would be needed at some point.

\section{Conclusion} We have established a partial correspondence 
between the hierarchy of strength of compactness hypotheses 
and the complexity hierarchy of constraint satisfaction problems. A positive
answer to Problem~\ref{uf->npc} would strenghten this correspondence.
The conjectured algebraic criterion for characterising NP-complete
constraint satisfaction problems may be the key to settling this question.
Polynomial constraint satisfaction problems do not all correspond to
equivalent compactness hypotheses. This is a bit unsettling. Is there
an algorithmic significance to the varying strength of compactness hypotheses?
More significantly, is there a possible interplay between descriptive
complexity and axiomatic set theory that goes beyond a simple correspondence?

\medskip {\bf Acknowledgements.} The authors wish to thank
Jaroslav Ne\v{s}et\v{r}il for pointing out the result
in reference \cite{greenwelllovasz}.  The authors gratefully acknowledge the partial support of NSERC for this work.

\end{document}